\theoremstyle{plain}
\newtheorem{theorem}{Theorem}[section]
\newtheorem{proposition}[theorem]{Proposition}
\newtheorem{remark}[theorem]{Remark}
\theoremstyle{definition}
\newtheorem{definition}[theorem]{Definition}
\newcommand{\bydef}{\,\stackrel{\mbox{\tiny\textnormal{\raisebox{0ex}[0ex][0ex]{def}}}}{=}\,}
\newcommand{\bx}{\bar{x}}
\newcommand{\bu}{\bar{u}}
\newcommand{\tu}{\tilde{u}}
\newcommand{\N}{\mathbb{N}}
\newcommand{\R}{\mathbb{R}}
\newcommand{\C}{\mathbb{C}}
\def\corcommstyle{\bf\small\tt}
\def\corrl #1<<#2||#3>>{
\if\visiblecomments y
  \begin{quote} {\corcommstyle $<<$COMMENT$>>$ {\color{red}#1\marginpar{!!}}\\$<<$OLD$<<$} \end{quote}

{\color{red} 
 #2
 }

  \begin{quote} {\corcommstyle ==NEW== } \end{quote}
   \noindent\hrulefill
 
\vspace{-10pt} 
 
 \noindent\hrulefill
 
 \vspace{-10pt} 
 
 \noindent\dotfill
 
  #3
  
   \noindent\dotfill 

\vspace{-10pt} 
 
 \noindent\hrulefill
 
 \vspace{-10pt} 
 
 \noindent\hrulefill
  \begin{quote} {\corcommstyle $>>$END$>>$ } \end{quote}
 \else
  #3
 \fi
}
\long\def\longcorrl #1<<#2||#3>>{
\if\visiblecomments y
  \begin{quote} {\corcommstyle $<<$COMMENT$>>$ {\color{red}#1\marginpar{!!}}\\$<<$OLD$<<$} \end{quote}
 
 {\color{red}

  #2
  
  }
  
  \begin{quote} {\corcommstyle ==NEW== } \end{quote}
  
    \noindent\hrulefill
 
\vspace{-10pt} 
 
 \noindent\hrulefill
 
 \vspace{-10pt} 
 
 \noindent\dotfill
 
  #3
  
   \noindent\dotfill 

\vspace{-10pt} 
 
 \noindent\hrulefill
 
 \vspace{-10pt} 
 
 \noindent\hrulefill
  \begin{quote} {\corcommstyle $>>$END$>>$ } \end{quote}
 \else
  #3
 \fi
}
\def\mlabel #1
\def\flabel #1
\def\corrq #1<<#2>>{
\if\visiblecomments y
  \begin{quote} {\corcommstyle $<<$COMMENT$>>$ #1\marginpar{!!}\\$<<$BEG$<<$} \end{quote}
  \noindent\hrulefill
 
\vspace{-10pt} 
 
 \noindent\hrulefill
 
 \vspace{-10pt} 
 
 \noindent\dotfill
 
 {\color{red}
  
  #2
  
  }
   
  \noindent\dotfill 

\vspace{-10pt} 
 
 \noindent\hrulefill
 
 \vspace{-10pt} 
 
 \noindent\hrulefill 
  \begin{quote} {\corcommstyle $>>$END$>>$ } \end{quote}
 \else
  #2
 \fi
}
\long\def\longcorrq #1<<#2>>{
\if\visiblecomments y
  \begin{quote} {\corcommstyle $<<$COMMENT$>>$ #1\marginpar{!!}\\$<<$BEG$<<$} \end{quote}
  \noindent\hrulefill
 
\vspace{-10pt} 
 
 \noindent\hrulefill
 
 \vspace{-10pt} 
 
 \noindent\dotfill

  #2

  \noindent\dotfill 

\vspace{-10pt} 
 
 \noindent\hrulefill
 
 \vspace{-10pt} 
 
 \noindent\hrulefill 
  \begin{quote} {\corcommstyle $>>$END$>>$ } \end{quote}
 \else
  #2
 \fi
}
\def\corrc #1<<>>{
\if\visiblecomments y
  \begin{quote} {\corcommstyle $<<$COMMENT$>>$ \color{red} #1\marginpar{!!}} \end{quote}
\fi
}
\def\corre #1<<#2||#3>>{
\if\visiblecomments y
  #3\marginpar{\corcommstyle #1}
 \else
  #3
 \fi
}
\long\def\longcorre #1<<#2||#3>>{
\if\visiblecomments y
  #3\marginpar{\corcommstyle #1}
 \else
  #3
 \fi
}
\def\corrs #1<<#2||#3>>{
\if\visiblecomments y
  #3\marginpar{\corcommstyle #2 $\rightarrow$ #3\\ #1}
 \else
  #3
 \fi
}
\def\corro #1<<#2||#3>>{
#2}
\def\corrn #1<<#2||#3>>{
#3}
\long\def\longcorro #1<<#2||#3>>{
#2}
\long\def\longcorrn #1<<#2||#3>>{
#3}
\long\def\underconstruction #1<<<#2>>>{
\if\visiblecomments y
  \begin{quote} {\corcommstyle $<<$UNDER CONSTRUCTION - BEGIN$>>$ #1\marginpar{!!}} \end{quote}
  #2
  \begin{quote} {\corcommstyle $>>$UNDER CONSTRUCTION - END$>>$ } \end{quote}
 \else
 \fi
}
\def\showcomments{
  \let\visiblecomments y
}
\def\hidecomments{
  \let\visiblecomments n
}
\begin{document}

\title{Spatial relative equilibria and periodic solutions of the Coulomb $(n+1)$%
-body problem\thanks{Data sharing not applicable to this article as no datasets were generated or analysed during the current study}}
\author{ Kevin Constantineau \thanks{%
McGill University, Department of Mathematics and Statistics, 805 Sherbrooke
Street West, Montreal, QC, H3A 0B9, Canada. \texttt{%
kevin.constantineau@mail.mcgill.ca}} 
\and 
Carlos Garc\'{\i}a-Azpeitia\thanks{%
IIMAS, Universidad Nacional Aut\'onoma de M\'exico, Apdo. Postal 20-726,
C.P. 01000, M\'exico D.F., M\'exico. \texttt{%
cgazpe@ciencias.unam.mx}} 
\and
Jean-Philippe Lessard\thanks{%
McGill University, Department of Mathematics and Statistics, 805 Sherbrooke
Street West, Montreal, QC, H3A 0B9, Canada. \texttt{jp.lessard@mcgill.ca}} }
\maketitle

\begin{abstract}

We study a classical model for the atom that considers the movement of $n$
charged particles of charge $-1$ (electrons) interacting with a fixed
nucleus of charge $\mu >0$. We show that two global branches of spatial
relative equilibria bifurcate from the $n$-polygonal relative equilibrium
for each critical values $\mu =s_{k}$ for $k\in \lbrack 2,...,n/2]$. In these
solutions, the $n$ charges form $n/h$-groups of regular $h$-polygons in space, where $h$ is the greatest common divisor of $k$ and $n$. Furthermore, each spatial relative equilibrium has a global branch of relative periodic solutions for each normal frequency satisfying some nonresonant condition. We obtain computer-assisted proofs of existence of several spatial relative equilibria on global branches away from the $n$-polygonal relative equilibrium. Moreover, the nonresonant condition of the normal frequencies for some spatial relative equilibria is verified rigorously using computer-assisted proofs.

AMS Subject Classification: 70F10, 65G40, 47H11, 34C25, 37G40

Keywords: Coulomb potential, N-body problem, relative equilibria, periodic solutions
\end{abstract}

\section{Introduction}

The Thomson problem is a classical model to study a configuration of $n$
electrons, constrained to the unit sphere, that repel each other with a
force given by Coulomb's law. Thomson posed the problem in 1904 as an atomic
model, later called the \emph{plum pudding model} \cite{LAFAVE20131029}.
Without loss of generality we can assume that the elementary charge of an
electron is $e=-1$, its mass is $m=1$, and the Coulomb constant is $1$. We
wish to analyze another classical model for the atom that considers the
movement of $n$ charged particles with negative charge $-1$ (electrons)
interacting with a fixed nucleus with positive charge $\mu $. Since
electrons and protons have equal charges with different signs, for a
non-ionized atom we consider that $\mu =n$. By supposing that the
gravitational forces are smaller than Coulomb's forces, the system of
equations describing the movement of the charges is%
\begin{equation}
\ddot{q}_{j}=-\mu \frac{q_{j}}{\left\Vert q_{j}\right\Vert ^{3}}%
+\sum_{i=0~(i\neq j)}^{n-1}\frac{q_{j}-q_{i}}{\left\Vert
q_{j}-q_{i}\right\Vert ^{3}},\qquad q_{j}\in \mathbb{R}^{3},\qquad
j=0,...,n-1,  \label{Eq}
\end{equation}%
where the first term of the force represents the interaction with the fixed
nucleus.

This problem is referred to as the charged $(n+1)$-body problem \cite%
{MR3007103,AlPe02}, the $n$-electron atom problem \cite{MR726510} and the
Coulomb $(n+1)$-body problem \cite{MR4032352} (and the references therein).
An interesting feature of this problem is the existence of spatial relative
equilibria, in contrast to the $n$-body problem where all relative
equilibria must be planar \cite{AlPe02}. In this paper we investigate the
existence of bifurcations of spatial relative equilibria arising from the
polygonal relative equilibrium. The existence of bifurcations of spatial
central configurations arising from planar configurations has been
investigated previously in \cite{Moeckel_1995}.

Specifically, we study equation (\ref{Eq}) in rotating coordinates $u_{j}$
with frequency $\sqrt{\omega }$, $q_{j}(t)=e^{\sqrt{\omega }t\bar{J}}u_{j}(t)
$, where $\bar{J}=J\oplus 0$ and $J$ is the standard symplectic matrix in $%
\mathbb{R}^{2}$. The starting solution of our study is the planar unitary
polygon $u=a$, which is an equilibrium of the equations in a rotating frame
with $\omega =\mu -s_{1}>0$, where 
\[
s_{k}\,\overset{\mbox{\tiny\textnormal{\raisebox{0ex}[0ex][0ex]{def}}}}{=}\,%
\frac{1}{4}\sum_{j=1}^{n-1}\frac{\sin ^{2}(kj\pi /n)}{\sin ^{3}(j\pi /n)}%
\text{,\qquad }k=0,...,n-1.
\]%
In Theorem \ref{Thm1} we prove that for each $k\in \lbrack 2,n/2]\cap 
\mathbb{N}$, there are two \emph{global bifurcations} of stationary
solutions (in rotating frame) from the trivial solution $u={a}$ at $\mu
=s_{k}$. Furthermore, each solution in the families is a spatial relative
equilibrium where the $n$ charges form $n/h$-groups of regular  $h$-polygons
in the space, where $h$ is the greatest common divisor of $k$ and $n$ (this
fact can be observed in the numerical computations in Figures \ref%
{fig:example1}, \ref%
{fig:example2}, \ref{fig:all_plots} and \ref{fig:all_plots2}). By \emph{%
global bifurcation} we mean that the bifurcation forms a connected component
that either goes to other relative equilibria, ends in a collision
configuration or its norm goes to infinity. The global property is proved
using Brouwer degree along the lines of the result in \cite{MR2832692},
which treats the bifurcation of relative equilibria from the unitary polygon
in the $n$-body problem.

\begin{figure}[tbp]
\centering
\begin{subfigure}[b]{0.45\textwidth}
         \centering
         \includegraphics[width=\textwidth]{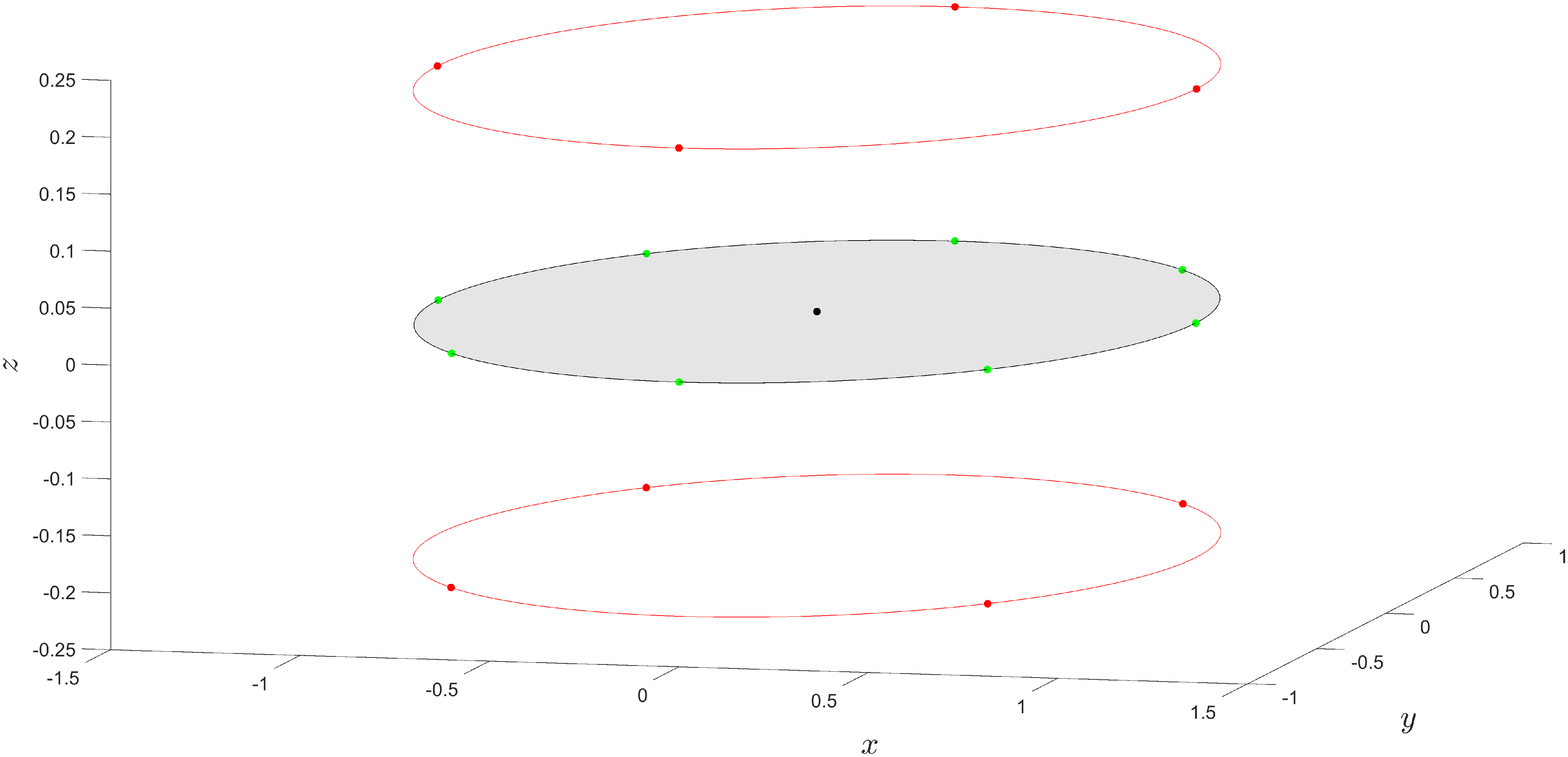}
         \label{fig:1}
     \end{subfigure}
\hfill 
\begin{subfigure}[b]{0.45\textwidth}
         \centering
         \includegraphics[width=\textwidth]{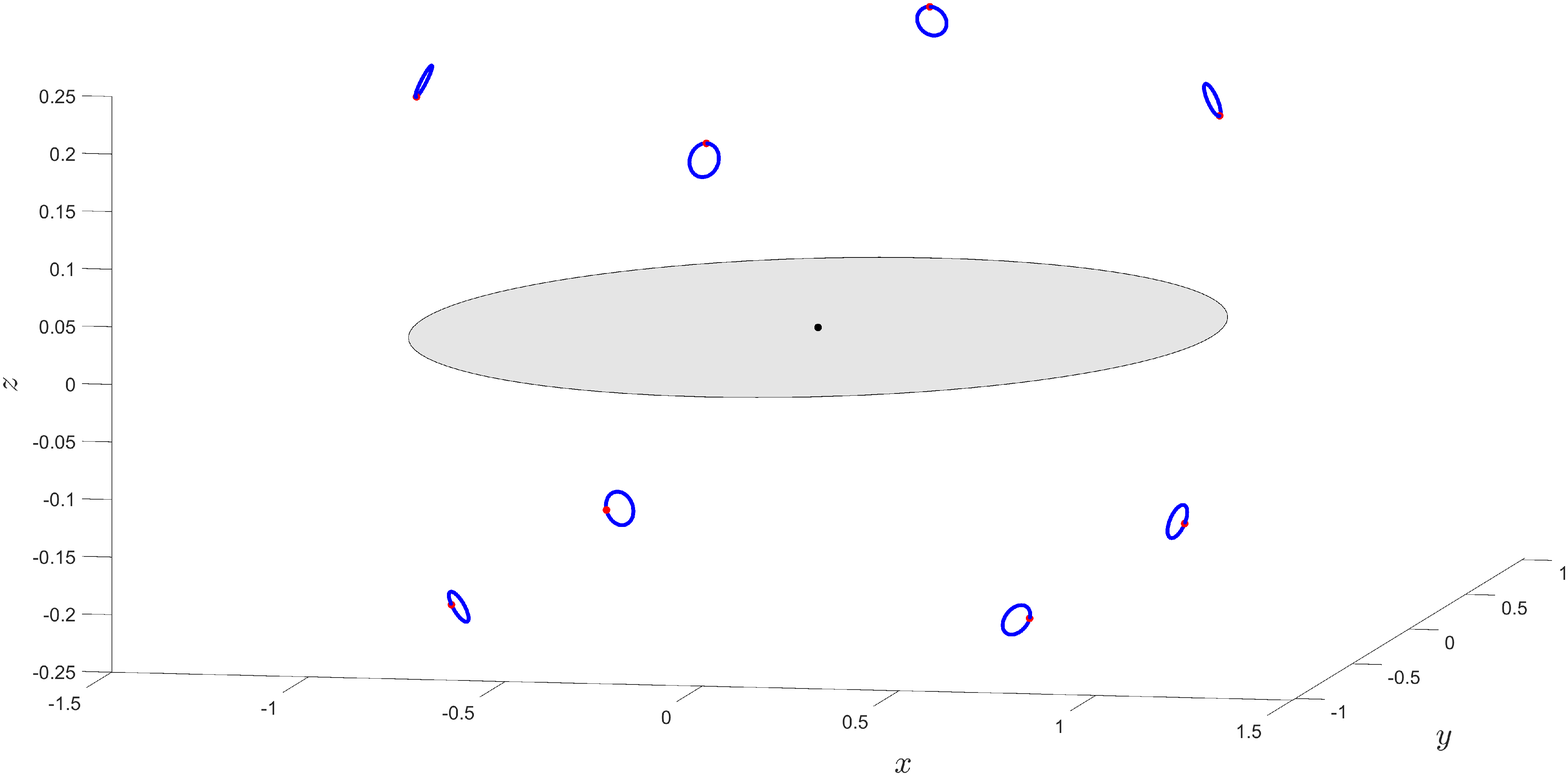}
         \label{fig:2}
     \end{subfigure}
\caption{\textbf{Left:} A relative equilibrium solutions
for $n=8$ and $k=4$ in the first family, whose existence has been obtained
using a computer-assisted proof (together with tight error bounds). \textbf{%
Right:} The first order expansion of the family of periodic solutions arising from the relative equilibrium in a rotating frame.}
\label{fig:example1}
\end{figure}

\begin{figure}[tbp]
\centering
\begin{subfigure}[b]{0.45\textwidth}
         \centering
         \includegraphics[width=\textwidth]{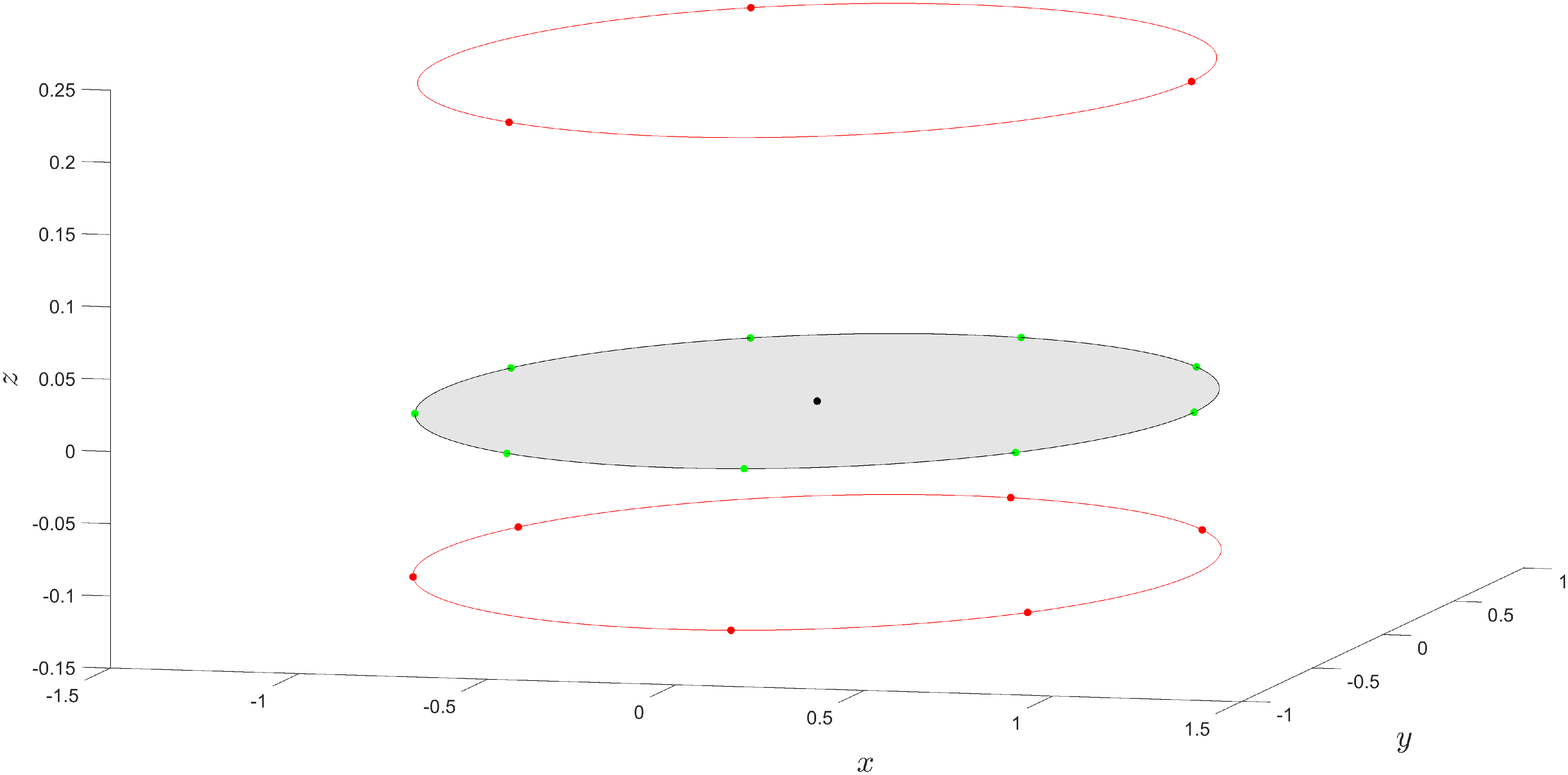}
         \label{fig:1}
     \end{subfigure}
\hfill 
\begin{subfigure}[b]{0.45\textwidth}
         \centering
         \includegraphics[width=\textwidth]{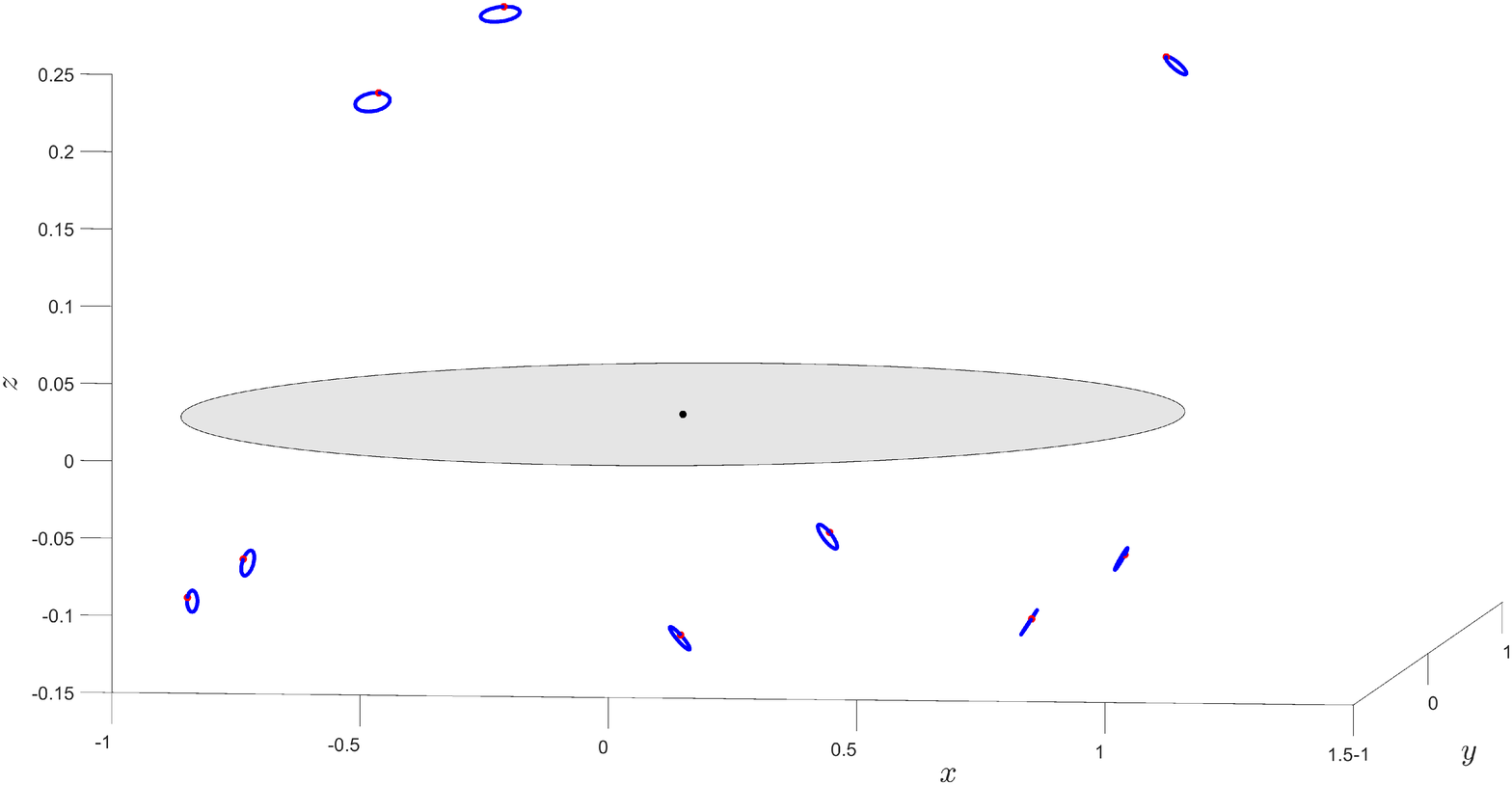}
         \label{fig:2}
     \end{subfigure}
\caption{\textbf{Left:} A relative equilibrium solutions
for $n=9$ and $k=3$ in the first family, whose existence has been obtained
using a computer-assisted proof (together with tight error bounds). \textbf{%
Right:} The first order expansion of the family of periodic solutions arising from the relative equilibrium in a rotating frame.}
\label{fig:example2}
\end{figure}

In the rotating frame, the linearized system at a spatial relative
equilibrium exhibits many periodic solutions (normal modes). In the present
paper, we prove the persistence of these periodic solutions in the nonlinear
system (see Theorem~\ref{Thm2}). These solutions are referred to as
nonlinear normal modes or Lyapunov families. In the inertial frame they are
known as relative periodic solutions and correspond to periodic or
quasiperiodic solutions. Furthermore, we prove the \emph{global property},
which in the present context means that a family of periodic solutions,
represented by a continuous branch $\mathcal{C} $ in the space of
frequencies and $2\pi $-periodic paths, is not compact or comes back to
another bifurcation point. The non-compactness of $\mathcal{C}$ implies that
either the norm or period of the solutions from $\mathcal{C}$ goes to
infinity or $\mathcal{C}$ ends in a collision orbit. The proof of the global
property is akin to the proof in \cite{MR2832692} regarding the existence of
periodic solutions from the polygonal relative equilibrium $a$, and is
obtained by means of the $SO(2)\times S^{1}$-equivariant degree developed in 
\cite{MR1984999}.

The result of Theorem \ref{Thm2} holds when some non-resonance assumptions
(Definition \ref{Def1}) on the normal frequencies of the spatial relative
equilibrium are satisfied. A main contribution of the present paper is the
implementation of computer-assisted proofs to validate global branches of
spatial relative equilibria and also the non-resonance assumption, which is
required to obtain the existence of families of periodic solutions arising
from them. In Section 3, we present the general approach (a
Newton-Kantorovich type theorem, see Theorem~\ref{thm:newton-kantorovich})
used to obtain the different computer-assisted proofs. This theorem is used
to validate the spatial relative equilibria (in Section~\ref{sec:CAP_rel_eq}%
) and its normal frequencies (in Section~\ref{sec:CAP_frequencies}) to
verify the conditions of Theorem~\ref{Thm2}. Figures \ref{fig:example1} and \ref%
{fig:example2}
contain an example of spatial relative equilibria whose existence has been
obtained using a computer-assisted proof (together with tight error bounds)
and for which the non-resonance condition has been rigorously verified.
Similar computer-assisted proofs were carried on for each (non black) point
in Figures~\ref{fig:all_plots} and~\ref{fig:all_plots2}.

\section{Existence of periodic solutions arising from spatial relative
equilibria}

We assume that: the gravitational forces are much smaller than Coulomb's
forces, the charge in the center is $\mu >0$, the $n$ charges have charge $%
-1 $, the mass of the charges is $m=1$ and the Coulomb constant is $\kappa =1
$. We also assume that the position of the central charge is fixed at the
center and the positions of the $n$ charges are determined by $%
q(t)=(q_{0}(t),...,\,q_{n-1}(t))$ with $q_{j}(t)\in \mathbb{R}^{3}$ for $%
j=0,...,n-1$. Under these assumptions, the system satisfies the Newtonian
equation

\begin{equation}
\ddot{u}(t)=\nabla U(q(t)),  \label{eqn01}
\end{equation}%
where $U(q)$ is the potential energy given by 
\[
U(q)\,\overset{\mbox{\tiny\textnormal{\raisebox{0ex}[0ex][0ex]{def}}}}{=}%
\,\sum_{j=0}^{n-1}\frac{\mu }{\Vert q_{j}\Vert }-\sum_{i<j}\frac{1}{\Vert
q_{j}-q_{i}\Vert },
\]%
where the first term represents the interaction with the fixed center.

Let $\bar{J}=J\oplus 0$, where $J$ is the standard symplectic matrix in $%
\mathbb{R}^{2}$. In rotating coordinates, $q_{j}(t)=e^{\sqrt{\omega }t\bar{J}%
}u_{j}(t)$, the system of equations becomes $\ddot{u}_{j}+2\sqrt{\omega }%
\bar{J}\dot{u}_{j}=\nabla _{u_{j}}V(u)$, where $V$ is the augmented
potential 
\[
V(u)=\frac{\omega }{2}\sum_{j=0}^{n-1}\left\Vert \bar{I}u_{j}\right\Vert
^{2}+\sum_{j=0}^{n-1}\frac{\mu }{\left\Vert u_{j}\right\Vert }-\sum_{i<j}%
\frac{1}{\left\Vert u_{j}-u_{i}\right\Vert }\text{,}
\]%
where $\bar{I}=-\bar{J}^{2}=1\oplus 1\oplus 0$. Let $u=(u_{0},...,u_{n-1})$
and $\mathcal{\bar{J}}=\bar{J}\oplus ...\oplus \bar{J}$, the system of
equation reads%
\begin{equation}
\ddot{u}+2\sqrt{\omega }\mathcal{\bar{J}}\dot{u}=\nabla V(u)\text{.}
\label{Equ}
\end{equation}

Let $S_{n}$ be the group of permutations of $\{0,1,...,n-1\}$ and $D_{n}$ be
the subgroup generated by the permutations $\zeta (j)=j+1$ and $\kappa
(j)=n-j$ mod $n$. We define the action of $\gamma \in S_{n}$ in $\mathbb{R}%
^{3n}$ as%
\begin{equation}
\rho (\gamma )(u_{0},u_{1},...,u_{n-1})=(u_{\gamma (0)},u_{\gamma
(1)},...,u_{\gamma (n-1)}).  \label{permutation}
\end{equation}%
Notice that this is a left action only if the product on $S_{n}$ is defined
according to the opposite convention that $\sigma _{1}\sigma _{2}\,\overset{%
\mbox{\tiny\textnormal{\raisebox{0ex}[0ex][0ex]{def}}}}{=}\,\sigma _{2}\circ
\sigma _{1}$. Clearly the potential $V$ is $S_{n}$-invariant. On the other
hand, while the potential $U(u)$ is $O(3)$-invariant, the potential $V(u)$
is only invariant under the action of the normalizer $O(2)\times \mathbb{Z}%
_{2}$ of $SO(2)\subset O(3)$.

We conclude that $V(u)$ is $G$-invariant with 
\[
G \,\overset{\mbox{\tiny\textnormal{\raisebox{0ex}[0ex][0ex]{def}}}}{=}\,
S_{n}\times O(2)\times \mathbb{Z}_{2}.
\]%
The explicit action of the elements $\theta ,\kappa _{y}\in O(2)$ and $%
\kappa _{z}\in \mathbb{Z}_{2}$ in the components of $u$ is given by 
\begin{equation}
\theta u_{j}=e^{-\mathcal{\bar{J}}\theta }u_{j},\qquad \kappa
_{y}u_{j}=R_{y}u_{j},\qquad \kappa _{z}u_{j}=R_{z}u_{j},  \nonumber
\end{equation}%
where 
\[
R_{y}=1\oplus -1\oplus 1,\qquad R_{z}=1\oplus 1\oplus -1.
\]

\subsection{The polygonal relative equilibrium}

The polygon $a=(a_{0},...,a_{n-1})$, where 
\[
a_{j}=\left( e^{ij\zeta },0\right) \in \mathbb{C\times R},\qquad \zeta \,%
\overset{\mbox{\tiny\textnormal{\raisebox{0ex}[0ex][0ex]{def}}}}{=}\,2\pi /n,
\]%
is a critical point of $V(u;\omega )$ for $\omega =\mu -s_{1}>0$. This
follows from the identity%
\[
\nabla _{u_{j}}V(a)=\omega a_{j}-\mu a_{j}+\sum_{i=0~(i\neq j)}^{n-1}\frac{%
a_{j}-a_{i}}{\left\Vert a_{j}-a_{i}\right\Vert ^{3}}=a_{j}\left( \omega -\mu
+s_{1}\right) ,
\]%
where we have used that 
\[
\sum_{i=0~(i\neq j)}^{n-1}\frac{a_{j}-a_{i}}{\left\Vert
a_{j}-a_{i}\right\Vert ^{3}}=a_{j}\sum_{i=0~(i\neq j)}^{n-1}\frac{%
1-e^{ij\zeta }}{\left\Vert 1-e^{ij\zeta }\right\Vert ^{3}}=a_{j}\frac{1}{4}%
\sum_{j=1}^{n-1}\frac{1}{\sin (j\pi /n)}=a_{j}s_{1}\text{.}
\]%
\subsection{Bifurcation of spatial relative equilibria}

Hereafter we assume that $\omega =\mu -s_{1}$ and we denote the dependence
of the potential $V$ in the parameter $\mu $ as $V(u;\mu )$. Thus the
polygon $u=a$ is a trivial solution of $\nabla V(u;\mu )=0$ with isotropy
group $G_{a}$ generated by 
\[
\tilde{\zeta}=\left( \zeta ,\zeta ,e\right) ,\qquad \tilde{\kappa}%
_{y}=\left( \kappa ,\kappa _{y},e\right) ,\qquad \tilde{\kappa}_{z}=\left(
e,e,\kappa _{z}\right) \in S_{n}\times O(2)\times \mathbb{Z}_{2},
\]%
where $e$ represents the identity element. As a consequence of the
continuous action of $SO(2)$, the orbit of the polygonal equilibrium $a$ is
one dimensional. Thus the generator of the $SO(2)$-orbit of $a$ is $-\bar{%
\mathcal{J}}a$, which belongs to the kernel of $D^{2}V(u)$.

Given that $G_{a}$ fixes $a$, then $D^{2}V(a)$ is $G_{a}$-equivariant and,
by Schur's lemma, it has the same eigenvalue in each irreducible
representations under the action of $G_{a}$. The spatial irreducible
representations of $G_{a}$ are obtained in section \textquotedblleft 8.4.
The problem of $n$-charges\textquotedblright\ in the paper \cite{MR3007103}.
Define $V_{k}$ as the subspace generated by 
\[
v_{k}=\left( v_{k}^{0},,...,v_{k}^{n-1}\right) \in \mathbb{R}^{3n},
\]%
where $v_{k}^{j}=(0,0,\cos jk\zeta )$ for $k\in \left[ 0,n/2\right] \cap 
\mathbb{N}$ and $v_{k}^{j}=(0,0,\sin jk\zeta )$ for $k\in (n/2,n-1]\cap 
\mathbb{N}$, then the irreducible $G_{a}$-representations are given by $%
V_{k} $ for $k=0,n/2$ and $V_{k}\oplus V_{n-k}$ for $k\in \lbrack 1,n/2)\cap 
\mathbb{N}$.

Specifically, using the isomorphism 
\[
av_{k}+bv_{n-k}\in V_{k}\oplus V_{n-k}\rightarrow a+ib\in \mathbb{C},
\]%
the action of $\tilde{\zeta},\tilde{\kappa}_{y},\tilde{\kappa}_{z}\in G_{a}$
in $z=a+ib\in \mathbb{C}$ is given by 
\begin{equation}
\tilde{\zeta}z=e^{ik\zeta }z,\qquad \tilde{\kappa}_{y}z=\bar{z},\qquad 
\tilde{\kappa}_{z}z=-z.  \label{Ac}
\end{equation}%
For $k=0,n/2$ the action in $V_{k}\simeq \mathbb{R}$ is the same as before
but with $z\in \mathbb{R}$. For instance, for $k\in (0,n/2)\cap \mathbb{N}$
we have that 
\begin{align*}
\tilde{\zeta}\left( av_{k}+bv_{n-k}\right) & =e^{-\mathcal{\bar{J}}\zeta
}\zeta \cdot \left( av_{k}+bv_{n-k}\right) \\
& =\left\{ \left( 0,0,a\cos (jk+k)\zeta -b\sin (jk+k)\zeta \right) \right\}
_{j=1}^{n} \\
& =\left( a\cos k\zeta -b\sin k\zeta \right) v_{k}+\left( b\cos k\zeta
+a\sin k\zeta \right) v_{n-k}.
\end{align*}%
Hence, we obtain that 
\[
\tilde{\zeta}z=\left( a\cos k\zeta -b\sin k\zeta \right) +i\left( b\cos
k\zeta +a\sin k\zeta \right) =e^{ik\zeta }z.
\]

In section \textquotedblleft 8.4. The problem of $n$-charges%
\textquotedblright\ in the paper \cite{MR3007103} is proven that the
eigenvalue of Hessian $D^{2}V(a)$ in each irreducible representation $V_{k}$
for $k=0,n/2$ and $V_{k}\oplus V_{n-k}$ is\ $-\mu +s_{k}$. For sake of
completeness we present a short proof of this fact. For $k\in \{0,...,n-1\}$%
, we define $T_{k}:\mathbb{R}\rightarrow W_{k}$ as%
\begin{align*}
T_{k}(w)& =(0,0,n^{-1/2}e^{ik\zeta }w,...,0,0,n^{-1/2}e^{nik\zeta }w)\text{
with } \\
W_{k}& =\{(0,0,e^{ik\zeta }w,...,0,0,e^{nik\zeta }w)\in \mathbb{C}^{3n}:w\in 
\mathbb{R}\}\text{.}
\end{align*}%
Since $W_{k}=V_{k}\oplus iV_{n-k}$,\ the result follows from the invariance
of the subspaces $W_{k}$ of $D^{2}V(a)$ with the following computation.

\begin{proposition}
For $k\in \{0,...,n-1\}$, we have 
\[
D^{2}V(a)T_{k}(w)=T_{k}(\left( -\mu +s_{k}\right) w).
\]
\end{proposition}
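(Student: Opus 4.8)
The plan is to evaluate the Hessian $D^{2}V(a)$ directly on the vertical test vector $\xi = T_{k}(w)$, whose $j$-th block is $\xi_{j} = (0,0,n^{-1/2}e^{ijk\zeta}w)$, and to read off that the output is again of the form $T_{k}(\lambda w)$ with $\lambda = -\mu + s_{k}$. First I would record the gradient
\[
\nabla_{u_{j}}V(u) = \omega \bar{I} u_{j} - \mu \frac{u_{j}}{\|u_{j}\|^{3}} + \sum_{i\neq j}\frac{u_{j}-u_{i}}{\|u_{j}-u_{i}\|^{3}},
\]
and then differentiate each of its three terms once more in the direction $\xi$, computing the $j$-th block of $D^{2}V(a)\xi$ as $\tfrac{d}{ds}\big|_{s=0}\nabla_{u_{j}}V(a+s\xi)$.

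The crucial point is a geometric decoupling between the in-plane equilibrium and the vertical perturbation: each $a_{j}=(e^{ij\zeta},0)$ and each difference $a_{j}-a_{i}$ has zero $z$-component, whereas $\xi$ is purely vertical. Consequently the harmonic term contributes $\omega \bar{I}\xi_{j}=0$ since $\bar{I}=1\oplus 1\oplus 0$ annihilates vertical vectors; the Jacobian of $-\mu u_{j}/\|u_{j}\|^{3}$ at $a_{j}$ equals $-\mu(I-3a_{j}a_{j}^{\top})$ by $\|a_{j}\|=1$, and its rank-one correction drops out because $a_{j}^{\top}\xi_{j}=0$, leaving $-\mu\xi_{j}$; and each interaction Jacobian contributes $\|a_{j}-a_{i}\|^{-3}(I-3\widehat{d}\,\widehat{d}^{\top})(\xi_{j}-\xi_{i})$ with $d=a_{j}-a_{i}$ in the $xy$-plane, so its rank-one part again annihilates the vertical difference $\xi_{j}-\xi_{i}$. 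Writing $c_{m}=n^{-1/2}e^{imk\zeta}w$, this shows the $j$-th block of $D^{2}V(a)\xi$ is again purely vertical, with $z$-component
\[
-\mu c_{j}+\sum_{i\neq j}\frac{c_{j}-c_{i}}{\|a_{j}-a_{i}\|^{3}},
\]
which by itself already proves that $W_{k}$ is $D^{2}V(a)$-invariant.

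It then remains to identify the eigenvalue. Factoring out $e^{ijk\zeta}$ and using $\|a_{j}-a_{i}\|=2|\sin((i-j)\pi/n)|$, the reindexing $l=i-j \bmod n$ makes the bracketed scalar independent of $j$ and equal to $-\mu+\sum_{l=1}^{n-1}(1-e^{ilk\zeta})/(8\sin^{3}(l\pi/n))$. Taking real and imaginary parts finishes the computation: the identity $1-\cos(lk\zeta)=2\sin^{2}(kl\pi/n)$ turns the real part into exactly $\tfrac14\sum_{l=1}^{n-1}\sin^{2}(kl\pi/n)/\sin^{3}(l\pi/n)=s_{k}$, while the imaginary part cancels termwise under the involution $l\mapsto n-l$ (since $\sin(l\pi/n)$ is invariant while $\sin(lk\zeta)$ is odd under it, with the possible middle term $l=n/2$ vanishing). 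Hence the $z$-component equals $(-\mu+s_{k})c_{j}$, i.e. the $j$-th block of $T_{k}((-\mu+s_{k})w)$, as claimed. The whole argument is mechanical; the one step deserving care is the bookkeeping of the rank-one Hessian corrections of the Coulomb terms and the verification that the in-plane geometry forces them to annihilate the vertical perturbation — once that decoupling is secured, the eigenvalue is dictated by the trigonometric identity and the defining formula for $s_{k}$.
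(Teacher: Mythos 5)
Your proof is correct and takes essentially the same route as the paper's: both rest on the planar/vertical decoupling at $a$ (the paper states it as block-diagonality of the $3\times 3$ Hessian blocks and computes the $zz$-entries by direct partial differentiation, you obtain the same entries via the rank-one projector corrections $I-3\hat{d}\hat{d}^{\top}$ annihilating vertical vectors), followed by the identical circulant-eigenvalue computation: factor out $e^{ijk\zeta}$, reindex, and apply $1-\cos(lk\zeta)=2\sin^{2}(kl\pi/n)$ to recover $s_{k}$. A minor point in your favour: you explicitly verify the cancellation of the imaginary part under $l\mapsto n-l$, a step the paper leaves implicit.
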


\begin{proof}
Let $\mathcal{A}_{ij}$ be the $3\times 3$ minor blocks of the Hessian $%
D^{2}V(a)=(\mathcal{A}_{ij})_{ij=1}^{n}$. The fact that $a$ is a planar
configuration implies that the matrices $\mathcal{A}_{ij}$ are block diagonal%
\begin{equation*}
\mathcal{A}_{ij}=diag(A_{ij},a_{ij})\text{,}
\end{equation*}%
where $A_{ij}$ is a $2\times 2$ matrix. For our purpose we only need to
compute the numbers $a_{ij}$. Let $d_{ij}=\left\vert u_{i}-u_{j}\right\vert $
be the distance between $u_{i}=(x_{i},y_{i},z_{i})$ and $%
u_{j}=(x_{j},y_{j},z_{j})$. For $i\neq j$ we have that $\partial
_{z_{i}}(d_{ij}^{-1})=-\partial _{z_{j}}(d_{ij}^{-1})$ and%
\begin{equation}
a_{ij}=-\partial _{z_{j}}\partial _{z_{i}}d_{ij}^{-1}|_{u=a}=\partial
_{z_{i}}^{2}d_{ij}^{-1}|_{u=a}=-d_{ij}^{-3}|_{u=a}=-\left( 2\sin ((i-j)\zeta
/2)\right) ^{-3}\text{.}  \label{aij}
\end{equation}%
For $i=j$ the number $a_{ii}$ satisfies%
\begin{equation*}
a_{ii}=\partial _{z_{i}}^{2}\left( \frac{\mu }{\left\Vert u_{j}\right\Vert }%
\right) _{u_{j}=a_{j}}-\sum_{j=0~(j\neq i)}^{n-1}\left( \partial
_{z_{i}}^{2}d_{ij}^{-1}\right) _{u_{j}=a_{j}}=-\mu -\sum_{j=0~(j\neq
i)}^{n-1}a_{ij}\text{.}
\end{equation*}

Now we need to denote to the component $w_{i}\in \mathbb{C}^{3}$ of the
vector $w=(w_{0},...,w_{n-1})\in \mathbb{C}^{3n}$ as $[w]_{i}=w_{i}$. From
the definitions we have that%
\begin{equation*}
\lbrack D^{2}V(a)T_{k}(w)]_{l}=\left(
0,0,n^{-1/2}\sum_{j=0}^{n-1}a_{lj}e^{ijk\zeta }w\right) \text{.}
\end{equation*}%
Since $a_{lj}=a_{0(j-l)}$ with $(j-l)\in \{0,...,n-1\}$ modulus $n$. From
the equality $a_{lj}e^{ijk\zeta }=e^{ilk\zeta }\left(
a_{0(j-l)}e^{i(j-l)k\zeta }\right) $ we have that%
\begin{equation*}
\lbrack D^{2}V(a)T_{k}(w)]_{l}=\left( 0,0,n^{-1/2}e^{ilk\zeta }b_{k}w\right)
=\left[ T_{k}(b_{k}w)\right] _{l}\text{,}
\end{equation*}%
where 
\begin{equation*}
b_{k}=\sum_{j=0}^{n-1}a_{0j}e^{ijk\zeta }=-\mu +\sum_{j=1}^{n-1}\left(
e^{ijk\zeta }-1\right) a_{0j}\text{,}
\end{equation*}%
because $a_{00}=-\mu -\sum_{j=0}^{n-1}a_{0j}$. Finally, using (\ref{aij}),
we obtain that%
\begin{equation*}
\sum_{j=1}^{n-1}\left( e^{ijk\zeta }-1\right) a_{0j}=\sum_{j=1}^{n-1}\frac{%
2\sin ^{2}(kj\zeta /2)}{2^{3}\sin ^{3}(j\zeta /2)}=s_{k}\text{.}
\end{equation*}
\end{proof}

According to \cite{MR3007103}, for $k\in \lbrack 2,n/2)$, the Hessian $%
D^{2}V(u;s_{k})$ has no additional zero-eigenvalues to the double
zero-eigenvalue corresponding to the subspace $V_{k}\oplus V_{n-k}$ and the
simple zero-eigenvalue corresponding to the generator of the $xy$-rotations $%
-\bar{\mathcal{J}}a$. That is, 
\[
\ker D^{2}V(a;s_{k})=V_{k}\oplus V_{n-k}\oplus \bar{\mathcal{J}}{a}.
\]
In order to prove the existence of solutions $\nabla V(u;\mu )=0$
bifurcating from the trivial solution $u=a$ when the parameter $\mu $
crosses $s_{k}$, we consider the fixed point spaces of two subgroups 
\[
H_{1}=\mathbb{Z}_{2}\left( \tilde{\kappa}_{y}\right) ,\qquad H_{2}=\mathbb{Z}%
_{2}\left( \tilde{\kappa}_{y}\kappa _{z}\right) .
\]%
That is, let $\nabla V^{H_{j}}:Fix(H_{j})\rightarrow Fix(H_{j})$ be the
restriction of $\nabla V$ to the fixed point space of $H_{j}$ for $j=1,2$,
we will show that the restricted maps $\nabla V^{H_{j}}$ for $j=1,2$ have
the advantage that the zero-eigenvalue {$-\bar{\mathcal{J}}a$} is not
present in $D^{2}V^{H_{j}}$ and that the double zero-eigenvalue $-\mu +s_{k}$
becomes simple.

The fixed point space of $\mathbb{R}^{3n}$ under the action of $H_{1}$
satisfies the symmetries 
\begin{equation}
u_{0}=R_{y}u_{0},\qquad u_{n/2}=R_{y}u_{n/2},\qquad u_{j}=R_{y}u_{n-j},
\label{Sym}
\end{equation}%
and of $H_{2}$ 
\begin{equation}
u_{0}=R_{z}R_{y}u_{0},\qquad u_{n/2}=R_{z}R_{y}u_{n/2},\qquad
u_{j}=R_{z}R_{y}u_{n-j}\text{.}  \label{Sym2}
\end{equation}

\begin{theorem}
\label{Thm1}For each $k\in \lbrack 2,n/2]\cap \mathbb{N}$, there are two 
\emph{global bifurcations} of solutions of $\nabla V(u;\mu )=0$ from the
trivial solution $u={a}$ at $\mu =s_{k}$, one denoted by $\mathfrak{F}%
_{1}^{k}$ with symmetries (\ref{Sym}) and another denoted by $\mathfrak{F}%
_{2}^{k}$ with symmetries (\ref{Sym2}). Furthermore, the relative equilibria
in both families are formed by $n/h$-groups of regular $h$-polygons, where $h
$ is the greatest common divisor of $k$ and $n$.
\end{theorem}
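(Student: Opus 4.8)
The plan is to convert this $G$-equivariant bifurcation problem into an ordinary bifurcation from a \emph{simple} eigenvalue by passing to the fixed-point subspaces $Fix(H_1)$ and $Fix(H_2)$, and then to extract the local bifurcation together with its global continuation from a change of Brouwer degree, following the strategy of \cite{MR2832692}. Since $V(\cdot\,;\mu)$ is $G$-invariant, its gradient is $G$-equivariant, so each $Fix(H_j)$ is invariant under $\nabla V$ and the restriction $\nabla V^{H_j}$ is exactly the gradient of $V$ restricted to $Fix(H_j)$ (principle of symmetric criticality). The decisive linear step is to compute $\ker D^2V^{H_j}(a;s_k)$ for $k\in[2,n/2)$. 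Using the identification $av_k+bv_{n-k}\mapsto z=a+ib$ and the actions (\ref{Ac}), the generator $\tilde{\kappa}_y$ acts by $z\mapsto\bar z$ and $\tilde{\kappa}_y\kappa_z$ by $z\mapsto-\bar z$; hence $V_k\oplus V_{n-k}$ meets $Fix(H_1)$ in the real line $\{b=0\}$ and $Fix(H_2)$ in the imaginary line $\{a=0\}$, each one-dimensional. I would also verify directly that the rotational generator $-\bar{\mathcal{J}}a$ spanning the $xy$-rotation part of $\ker D^2V(a;s_k)$ is reversed by $\tilde{\kappa}_y$ (the reflection $R_y$ together with the permutation $\kappa$ flips the sense of rotation), so it lies in neither $Fix(H_j)$. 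Combined with $\ker D^2V(a;s_k)=V_k\oplus V_{n-k}\oplus\bar{\mathcal{J}}a$ from \cite{MR3007103}, this shows $\ker D^2V^{H_j}(a;s_k)$ is one-dimensional, i.e.\ the crossing eigenvalue has become simple while the rotational degeneracy has been removed.

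Next I would use that the eigenvalue of $D^2V(a)$ on $V_k\oplus V_{n-k}$ equals $-\mu+s_k$ (the Proposition above), so a single real eigenvalue of $D^2V^{H_j}$ crosses zero transversally (with $\mu$-derivative $-1$) as $\mu$ increases through $s_k$. Because $\nabla V^{H_j}$ is a finite-dimensional gradient field with an isolated zero at $u=a$ for $\mu$ near but not equal to $s_k$, its Brouwer index at $a$ is $(-1)^{m(\mu)}$ with $m$ the Morse index, and $m$ jumps by one across $s_k$; the index therefore changes sign. Homotopy invariance of the degree then forces $(a,s_k)$ to be a bifurcation point, and Rabinowitz's global alternative yields a connected continuum of nontrivial solutions $\mathfrak{F}_j^k\subset Fix(H_j)\times\mathbb{R}$ emanating from it that, within the collision-free open domain, must be non-compact: either its norm is unbounded, or it accumulates on a collision configuration, or it returns to the trivial branch at another value $s_{k'}$. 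Taking $j=1$ gives the family with symmetries (\ref{Sym}) and $j=2$ the family with symmetries (\ref{Sym2}).

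To read off the geometry I would enlarge the symmetry used in the reduction. Writing $h=\gcd(k,n)$ and $k=hk'$, the element $\tilde{\zeta}^{n/h}\in G_a$ acts on $z\in V_k\oplus V_{n-k}$ by $z\mapsto e^{ik(n/h)\zeta}z=e^{2\pi ik'}z=z$, so it fixes the whole bifurcating representation pointwise; hence the entire construction can be carried out inside the smaller space $Fix(\langle H_j,\tilde{\zeta}^{n/h}\rangle)$ without altering the kernel computation, forcing every solution of $\mathfrak{F}_j^k$ to be $\tilde{\zeta}^{n/h}$-invariant. Geometrically $\tilde{\zeta}^{n/h}$ is the relabeling $j\mapsto j+n/h$ composed with the rotation by $2\pi/h$ about the $z$-axis, which preserves height; thus the indices $\{j,j+n/h,\dots,j+(h-1)n/h\}$ mark $h$ charges at a common height related by rotations through multiples of $2\pi/h$, i.e.\ a horizontal regular $h$-polygon, and letting $j$ run over $\{0,\dots,n/h-1\}$ produces the asserted $n/h$ groups. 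The boundary case $k=n/2$ (for even $n$) has the one-dimensional representation $V_{n/2}$, so the kernel is already simple once $\bar{\mathcal{J}}a$ is excluded and the same computation gives $h=n/2$; I would treat it separately since there $V_{n/2}\subset Fix(H_1)$ but $V_{n/2}\not\subset Fix(H_2)$, so the pairing of the two families needs its own short argument.

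I expect the main obstacle to be the equivariant bookkeeping that guarantees the crossing eigenvalue is \emph{exactly} simple on each $Fix(H_j)$ while the neutral rotational direction $-\bar{\mathcal{J}}a$ is simultaneously excluded: getting the group actions, the reflection conventions, and the orientation of the rotation right is what makes the degree jump legitimate. Once that reduction is in place, the change-of-degree argument and Rabinowitz's global alternative are a standard, if careful, application of gradient global bifurcation exactly as in \cite{MR2832692}, and the geometric conclusion is a direct consequence of the $\tilde{\zeta}^{n/h}$-invariance.
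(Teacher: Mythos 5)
Your proposal is correct and follows essentially the same route as the paper's own proof: reduction to the fixed-point spaces $Fix(H_1)$ and $Fix(H_2)$, where the crossing eigenvalue $-\mu+s_k$ becomes simple and the rotational direction $\bar{\mathcal{J}}a$ is excluded, then global bifurcation via the Brouwer-degree argument of \cite{MR2832692}, and finally the trivial action of $\tilde{\zeta}^{n/h}$ on $V_k\oplus V_{n-k}$ to obtain the $n/h$ groups of regular $h$-polygons. Your caution about the boundary case $k=n/2$ is warranted: the paper dispatches it with the single assertion $\ker D^{2}V^{H_{2}}(a;s_{k})=\ker D^{2}V(a;s_{k})\cap Fix(H_{2})=V_{n-k}$, which is problematic precisely when $k=n/2$ (there $V_{n-k}=V_{n/2}$, yet $V_{n/2}\cap Fix(H_2)=\{0\}$ since $\tilde{\kappa}_y\kappa_z$ acts as $z\mapsto -z$ on $V_{n/2}$), so your flagging that the $H_2$ family at $k=n/2$ needs a separate argument is, if anything, more careful than the published proof.
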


\begin{proof}
We look for bifurcation of solutions of $\nabla V^{H_{1}}(u;\mu )=0$ from
the trivial solution $u=a$ at $\mu =s_{k}$. Using that $\ker
D^{2}V(a;s_{k})=V_{k}\oplus V_{n-k}\oplus \bar{\mathcal{J}}{a}$, it is easy
to see that $v^{n-k}\in V_{n-k}$ and $\bar{\mathcal{J}}{a}$ do not satisfy
the symmetry (\ref{Sym}), which implies that they do not belong to $%
Fix(H_{1})$. Thus the kernel of $D^{2}V^{H_{1}}({a};s_{k})$ is one
dimensional and generated by $v^{k}\in V_{k}$. The eigenvalue $-\mu +s_{k}$
corresponding to the eigenvector $v^{k}\in V_{k}$ crosses zero at $\mu
=s_{k} $. Using Brouwer degree as in section \textquotedblleft 3.
Bifurcation theorem\textquotedblright\ in \cite{MR2832692}, we can prove the
existence of a \emph{global bifurcation} of solutions of $\nabla
V^{H_{1}}(u;\mu )=0$ from the trivial solution $u={a}$ at $\mu =s_{k}$.
Furthermore, since $\tilde{\zeta}^{n/h}$ leaves the subspace $V_{k}$ fixed,
because $\tilde{\zeta}^{n/h}z=\left( e^{ik2\pi /n}\right) ^{n/h}z=z$
according to (\ref{Ac}), then the family of solutions $\mathfrak{F}_{1}^{k}$%
\ arising from $\mu =s_{k} $\ is fixed by the group $\mathbb{\tilde{Z}}_{h}$
generated by $\tilde{\zeta}^{n/h}$. This implies that the solution is formed
by $n/h$-polygons (see \cite{MR2832692} for details). The proof in the case $%
k=n/2$ and $H_{2}$ is analogous, the only key difference is that 
\begin{equation*}
\ker D^{2}V^{H_{2}}(a;s_{k})=\ker D^{2}V(a;s_{k})\cap Fix(H_{2})=V_{n-k},
\end{equation*}%
because $v^{k}\in V_{k}$ and $\bar{\mathcal{J}}{a}$ do not satisfy the
symmetry (\ref{Sym2}), and they do not belong to $Fix(H_{2})$.
\end{proof}

The polygon $a$ is a relative equilibrium only when $\omega =\mu -s_{1}>0$,
which requires that $\mu >s_{1}$. In a slightly different context it was
noticed by R. Moeckel \cite{Moeckel_1990} that the condition $\mu =n>s_{1}$
holds only for $n<473$. An interesting consequence of this fact is that for
a non-ionized atom the $n$-polygon is a relative equilibrium only for an
atomic number less than $473$. We obtain numerically the additional inequalities $%
s_{1}<n<s_{2}$ for $n=3$ and $n\geq 12$, $s_{2}<n<s_{3}$ for $%
n=4,5,8,9,10,11$ and $s_{3}<n<s_{4}$ for $n=6,7$. Since the bifurcations of
relative equilibria arising from the polygons at $\mu =s_{k}$ are
subcritical, for instance, one can deduce from these inequalities that it is
unlikely to find a relative equilibrium with $\mu =n$ in the bifurcation
from $s_{1}$ for the cases $n=4,...,11$.
\subsection{Periodic solutions arising from spatial relative equilibria}

Now we turn the attention to the analysis of non-trivial $2\pi /\nu $%
-periodic solutions of (\ref{Equ}) arising from a spatial relative
equilibrium $(u_{0};\mu _{0})$, which for the present paper belongs to the
families $\mathfrak{F}_{1}^{k}$ or $\mathfrak{F}_{2}^{k}$. For the
validation of the hypotheses necessary to obtain the periodic solutions we
will use the computer-assisted proof technique of Section 3. These
hypotheses are easier to verify in the equivalent system 
\begin{equation}  \label{Eqs}
\begin{pmatrix}
\dot{u} \\ 
\dot{v}%
\end{pmatrix}
= 
\begin{pmatrix}
v \\ 
-2\sqrt{\mu -s_{1}}\mathcal{\bar{J}}v+\nabla V(u;\mu )%
\end{pmatrix}%
.
\end{equation}

The linearization of equation (\ref{Eqs}) at a spatial relative equilibrium $%
(u_{0};\mu _{0})$ is 
\[
\binom{\dot{u}}{\dot{v}}=L(u_{0};\mu _{0})\binom{u}{v},\qquad L(u_{0};\mu
_{0}) \,\overset{\mbox{\tiny\textnormal{\raisebox{0ex}[0ex][0ex]{def}}}}{=}%
\, \left( 
\begin{array}{cc}
0 & I \\ 
D ^{2}V(u_{0};\mu ) & -2\sqrt{\mu _{0}-s_{1}}\mathcal{\bar{J}}%
\end{array}%
\right) .
\]
Notice that $\lambda $ is an eigenvalue of $L(u_{0};\mu _{0})$ with
eigenvector $(u,v)$ if and only if $v=\lambda u$ and%
\[
-2\sqrt{\mu _{0}-s_{1}}\mathcal{\bar{J}}v+D^{2}V(u_{0};\mu _{0})u=\lambda v.
\]%
This condition is equivalent to $u\in \ker \tilde{M}(\lambda )$, where 
\[
\tilde{M}(\lambda )=-\lambda ^{2}I-2\sqrt{\mu _{0}-s_{1}}\lambda \mathcal{%
\bar{J}}+D^{2}V(u_{0};\mu _{0}).
\]

\begin{remark}
\label{rem:eigs_relations} Since $\tilde{M}(\lambda )=\tilde{M}(-\lambda
)^{T}$, then $\det \tilde{M}(\lambda )=\det \tilde{M}(-\lambda )$ is an even
real polynomial in $\lambda $. Thus $\bar{\lambda},-\lambda ,-\bar{\lambda}$
are eigenvalues of $L(u_{0};\mu _{0})$ if $\lambda \in \mathbb{C}$ is an
eigenvalue of $L(u_{0};\mu _{0})$. Actually, this is an immediate
consequence of the fact that the matrix $L(u_{0};\mu _{0})$ is a
reformulation, as first order system for positions and velocities, of a
Hamiltonian matrix \cite{MR1140006}.
\end{remark}

The purely imaginary eigenvalues $\lambda =i\nu _{0}$ of $L(u_{0};\mu _{0})$
give the (normal) frequencies of the periodic solutions of the linearized
system. The periodic solutions of the linearized system persist in the
nonlinear system (non-linear normal modes) under the assumptions of the
Lyapunov center theorem \cite{MR1140006}. The main assumption is that $i\nu
_{0}$ is non-resonant, which means that the eigenvalue $i\nu _{0}$ is a
simple eigenvalue of $L(u_{0};\mu_{0}) $ and $il\nu _{0}$ is not an
eigenvalue of $L(u_{0};\mu_{0})$ for any integer $l\neq 1$.

The classical Lyapunov center theorem cannot be applied directly because the
equilibrium $a$ is not isolated and zero is always an eigenvalue of $%
L(u_{0};\mu _{0})$ due to the $SO(2)$-action. Other equivariant versions of
the Lyapunov theorem consider these circumstances such as \cite{MR1984999}
and \cite{MR2003792}. In order to use a simple version of those results, we
make the following definition.

\begin{definition}
\label{Def1} We say that $i\nu _{0}$ is a $SO(2)$-nonresonant eigenvalue of $%
L(u_{0};\mu _{0}) $ if $i\nu _{0}$ is a simple eigenvalue of $L(u_{0};\mu
_{0}) $, $0$ is a double eigenvalue of $L(u_{0};\mu _{0})$ due to the action
of the group $SO(2)$, and $il\nu _{0}$ is not an eigenvalue of $L(u_{0};\mu
_{0}) $ for integers $l\geq 2$.
\end{definition}

In the case that $i\nu _{0}$ is a $SO(2)$-nonresonant eigenvalue of $L(u_{0};\mu _{0})$ with
eigenvector $(u,v)$, the first order asymptotic expansion of the family of periodic solutions is given by
\[
u(t)=u_{0}+\varepsilon \operatorname{Re}(e^{i\nu _{0}t}u)+O(\varepsilon^2).
\]
We use this fact to produce the illustrations of the periodic solutions in Figures \ref{fig:example1} and \ref{fig:example2}.
\begin{theorem}
\label{Thm2} A relative equilibrium $(u_{0};\mu _{0}) $ has a global family
of $2\pi /\nu $-periodic solutions arising from $u_{0}$ with initial
frequency $\nu =\nu _{0}$ when $L(u_{0};\mu _{0}) $ has a $SO(2)$%
-nonresonant eigenvalue $i\nu _{0}$.
\end{theorem}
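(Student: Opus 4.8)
The plan is to recast the search for $2\pi/\nu$-periodic solutions of (\ref{Eqs}) as a zero-finding problem for an $SO(2)\times S^{1}$-equivariant map and then to invoke the global equivariant Lyapunov center theorem of \cite{MR1984999}. First I would rescale time by the unknown frequency: writing $w=(u,v)$ and $\tau=\nu t$, a $2\pi/\nu$-periodic solution of (\ref{Eqs}) becomes a $2\pi$-periodic path solving $\nu\, w'(\tau)=f(w)$, where $f$ denotes the vector field on the right-hand side of (\ref{Eqs}). The relative equilibrium $w_{0}=(u_{0},0)$ is then a $\nu$-independent trivial solution (since $\nabla V(u_{0};\mu_{0})=0$), and one looks for branches of nonconstant solutions bifurcating from the line $\{(w_{0},\nu):\nu>0\}$ inside the space of frequencies and $2\pi$-periodic paths.

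The two continuous symmetries at play are the spatial $SO(2)$, acting on each particle by $u_{j}\mapsto e^{-\bar{\mathcal{J}}\theta}u_{j}$ (a symmetry of $V$, hence of $f$), and the $S^{1}$ of time translations acting on $2\pi$-periodic paths; together they make the reformulated operator $SO(2)\times S^{1}$-equivariant. Expanding in Fourier modes $w(\tau)=\sum_{l}w_{l}e^{il\tau}$, the linearization $\nu\frac{d}{d\tau}-L(u_{0};\mu_{0})$ along the trivial branch has nontrivial kernel exactly when $il\nu$ is an eigenvalue of $L(u_{0};\mu_{0})$ for some $l\in\mathbb{Z}$. The content of Definition \ref{Def1} is precisely that at $\nu=\nu_{0}$ this kernel splits cleanly: the $l=0$ block carries only the $SO(2)$-forced double zero eigenvalue, the $l=\pm1$ blocks carry the simple eigenvalues $\pm i\nu_{0}$ with their eigenvectors, and every harmonic $il\nu_{0}$ with $|l|\ge2$ is absent from the spectrum. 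I would verify that the condition on $l\ge2$ also rules out the negative harmonics using the spectral symmetry $\lambda\mapsto-\lambda,\bar\lambda$ recorded in Remark \ref{rem:eigs_relations}, which itself stems from the Hamiltonian structure of $L(u_{0};\mu_{0})$.

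With the kernel understood, the next step is to verify the hypotheses of the $SO(2)\times S^{1}$-equivariant degree of \cite{MR1984999} and to compute the bifurcation invariant at $(w_{0},\nu_{0})$. Since $i\nu_{0}$ is a simple eigenvalue of $L(u_{0};\mu_{0})$, the singular operator $i\nu I-L(u_{0};\mu_{0})$ restricted to the fundamental mode produces a nondegenerate crossing as $\nu$ passes $\nu_{0}$, and because no other harmonic interferes, the associated equivariant invariant (the crossing number carried by the isotropy type of the mode $e^{i\tau}$) is nonzero. This yields a local branch of genuinely nonconstant periodic solutions. The global statement then follows from the standard alternative of equivariant degree theory, exactly as in \cite{MR2832692}: the bifurcating continuum $\mathcal{C}$ is either non-compact or returns to the trivial branch at a second bifurcation point, and non-compactness forces the norm or the period of the solutions to blow up, or the orbit to end in a collision.

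The main obstacle will be the forced double-zero eigenvalue produced by the non-isolated $SO(2)$-orbit of $u_{0}$, which is exactly what prevents a direct appeal to the classical Lyapunov center theorem. The delicate point is to confirm that the nonresonance condition of Definition \ref{Def1} keeps this zero eigenvalue \emph{only} double, so that the $l=0$ Fourier block contributes nothing beyond the degeneracy imposed by the group orbit and the Hamiltonian spectral symmetry. Establishing this clean separation---isolating the genuine bifurcation at $i\nu_{0}$ from the group-induced kernel---is precisely what makes the equivariant framework of \cite{MR1984999} indispensable, and it is the step I would treat most carefully.
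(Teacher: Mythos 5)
Your proposal is correct and follows essentially the same route as the paper: rescale time, expand the linearization in Fourier modes, use the $SO(2)$-nonresonance to confine the kernel to the $l=0$ group-orbit direction plus the simple $l=\pm 1$ modes, and conclude global bifurcation from the change of the $SO(2)\times S^{1}$-equivariant index of \cite{MR1984999} at the simple crossing, with the global alternative as in \cite{MR2832692}. The only cosmetic difference is that the paper works with the second-order map $\mathcal{F}(x;\mu_{0},\nu)$, whose Fourier symbol $M(\nu)$ is self-adjoint, so the index change is read off as a jump of the Morse index $n(\nu)$ at the simple root $\nu_{0}$ of $\det M(\nu)$, whereas you keep the first-order system and phrase the same fact as a nondegenerate crossing number.
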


\begin{proof}
Looking for $2\pi /\nu $-periodic solutions of the equation $\ddot{u}+2\sqrt{%
\mu -s_{1}}\mathcal{\bar{J}}\dot{u}=\nabla V(u;\mu )$ is equivalent to look
for zeros $x(t)=u(t\nu )$ of the map%
\begin{equation*}
\mathcal{F}(x;\mu ,\nu )=-\nu ^{2}\ddot{x}-2\sqrt{\mu -s_{1}}\mathcal{\bar{J}%
}\nu \dot{x}+\nabla V(u;\mu ):H_{2\pi }^{2}\times \mathbb{R}^{2}\rightarrow
L_{2\pi }^{2}\text{.}
\end{equation*}%
We consider that $\mu =\mu _{0}$ is fixed, then $u_{0}$ satisfies $\mathcal{F%
}(u_{0};\mu _{0},\nu )=\nabla V(u_{0};\mu _{0})=0$. The linearization $D%
\mathcal{F}(u_{0};\mu _{0},\nu )$ in Fourier components $x=\sum x_{l}e^{ilt}$
is $D\mathcal{F}(u_{0};\mu _{0},\nu )=\sum_{l\in \mathbb{Z}}M(l\nu
)x_{l}e^{ilt}$, where 
\begin{equation*}
M(\nu )=\tilde{M}(i\nu )=\nu ^{2}I-2\sqrt{\mu _{0}-s_{1}}i\nu \mathcal{\bar{J%
}}+D^{2}V(u_{0};\mu _{0})
\end{equation*}%
is a self-adjoint matrix. Thus, the assumption that $i\nu _{0}$ is $SO(2)$%
-nonresonant implies that $\ker M(0)$ is generated by $\bar{\mathcal{J}}{a}$%
, $\nu _{0}$ is a simple zero of $\det M(\nu _{0})$ and $M(l\nu _{0})$ is
invertible for integers $l\geq 2$.

Therefore, the kernel of the linearized operator $D\mathcal{F}(u_{0};\mu
_{0},\nu _{0})$ consists exactly of $\bar{\mathcal{J}}{a}$ and the real and
imaginary parts of $e^{it}w$ with $w\in \ker M(\nu _{0})$. These are the
necessary hypotheses in order to prove the bifurcation theorem in Section 6
in \cite{MR3007103}. We proceed analogously: Let $\sigma $ be the sign of
the determinant of $D^{2}V(u_{0};\mu _{0})$ in the orthogonal complement to $%
\bar{\mathcal{J}}{a}$ (the generator of the $SO(2)$-orbit). Under the
non-resonance assumption, we have that $\sigma \neq 0$. Let $n(\nu )$ be the
Morse index of the self-adjoint matrix $M(\nu )$. Since $M(l\nu _{0})$ is
invertible for integers $l\geq 2$, according to Section 6 in \cite{MR3007103}%
, we have that the $SO(2)\times S^{1}$-equivariant index of $\mathcal{F}%
(x;\mu_{0},\nu )$ at the orbit of $u_{0}$ is $\sigma n(\nu )(\mathbb{Z}_{1})$%
. In \cite{MR1984999} is proven that a global bifurcation of periodic
solutions exists if this index changes. The result follows from the fact
that $\nu _{0}$ is a simple root of the polynomial $\det M(\nu )=0$, i.e.
the Morse index $n(\nu )$ necessarily changes at $\nu _{0}$.
\end{proof}

\begin{remark}
The local existence of the family of periodic solutions can be proven using
Poincar\'{e} sections as in \cite{MR2003792}. It is also possible to use
equivariant degree theory to prove the existence of the family of periodic
solutions even for $SO(2)$-resonant eigenvalues, but for validating the
hypotheses with computer-assisted proofs it is simple to consider the case
of $SO(2)$-nonresonant eigenvalues.
\end{remark}

\section{Computer-assisted proofs of relative equilibria}

In this section, we use a pseudo-arclength continuation method (e.g. see \cite{MR910499}) to numerically compute branches of steady states in the families $\mathfrak{F}_{1}^{k}$ and $\mathfrak{F}_{2}^{k}$. Along with the numerical continuation, we obtain computer-assisted proofs of the equilibria and we verify the hypotheses of Theorem~\ref{Thm2} to conclude the existence of families of periodic orbits. 
This is done using a Newton-Kantorovich type theorem, which is similar to the Krawczyk operator's approach \cite{MR657002,MR255046} and the interval Newton method \cite{MR0231516}. The presented formulation is inspired by the so-called {\em radii polynomial approach} (e.g. see \cite{MR2338393,HLM,MR3917433}), which is also a variant of the Newton-Kantorovich Theorem.

Consider a finite dimensional Banach space $X$ (in our context $X=\R^N$ or $X=\C^N$, for some $N \in \N$).
Choose a norm $\| \cdot\|_X$ on $X$. Given a point $y \in X$ and a radius $r>0$, denote by $B_r(y) = \{ x \in X : \|y - x\| < r \}$ the open ball of radius $r$ centered at $y$. Similarly, denote by $\overline{B_r(y)}$ the closed ball.

\begin{theorem} \label{thm:newton-kantorovich}
Let $U \subset X$ be an open set. Consider a Fr\'echet differentiable mapping $F: U \to X$ and fix a point $\bu \in U$ (an approximate zero of $F$). Let $A$ be an approximate inverse of the Jacobian matrix $DF(\bu)$ (that is $\| I - A DF(\bu) \|_{B(X)} \ll 1$), where $I$ is the identity on $X$ and where $\| \cdot \|_{B(X)}$ denotes the operator/matrix norm induced by the norm $\| \cdot \|_X$ on $X$. 
Fix $r_*>0$.
Suppose that the bounds $Y, Z=Z(r_*) > 0$ satisfy
\[
\| A F(\bu)\|_X \leq Y 
\quad \text{and} \quad
\sup_{z \in \overline{B_{r_*}(\bu)}} \| I - A DF(z) \|_{B(X)} \leq Z.
\]
Define
\begin{equation} \label{eq:radii_polynomial}
p(r) = (Z-1)r + Y.
\end{equation}
If there exists $r_0 \in (0, r_*]$ such that $p(r_0) < 0$, then there is a unique $\tu \in B_{r_0}(\bu) $ such that $F(\tu) = 0$.
\end{theorem}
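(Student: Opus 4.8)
The plan is to recast the equation $F=0$ as a fixed-point problem for the Newton--Krawczyk type operator $T(x) \bydef x - A F(x)$, whose fixed points in $\overline{B_{r_0}(\bu)}$ will be shown to coincide exactly with the zeros of $F$, and then to apply the Banach contraction principle. The first thing I would record is that the hypothesis $p(r_0)<0$ forces $Z<1$: from $p(r_0)=(Z-1)r_0+Y<0$ with $r_0>0$ and $Y>0$ one reads off $Z-1<0$. Because $X=\R^N$ or $\C^N$ and hence $A$ and $DF(\bu)$ are square matrices, the bound $\|I - A\,DF(\bu)\|_{B(X)}\le Z<1$ makes $A\,DF(\bu)$ invertible (Neumann series), so that $A$ itself is invertible, in particular injective. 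This injectivity is exactly what lets me convert the fixed-point equation $A F(\tu)=0$ into $F(\tu)=0$ at the end.

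The analytic heart of the argument is a single inequality: since $T$ is Fr\'echet differentiable on the convex set $\overline{B_{r_*}(\bu)}$ with $DT(z)=I-A\,DF(z)$, the mean value inequality gives, for all $x,y\in \overline{B_{r_0}(\bu)}\subseteq \overline{B_{r_*}(\bu)}$,
\[
\|T(x)-T(y)\|_X \le \Bigl(\sup_{t\in[0,1]}\|I-A\,DF(y+t(x-y))\|_{B(X)}\Bigr)\|x-y\|_X \le Z\,\|x-y\|_X,
\]
where the final bound uses that the whole segment lies in $\overline{B_{r_*}(\bu)}$, the region over which the supremum defining $Z$ is taken. Since $Z<1$ this shows $T$ is a contraction. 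Specializing $y=\bu$ and inserting $\|A F(\bu)\|_X\le Y$ yields the self-mapping estimate
\[
\|T(x)-\bu\|_X \le Z\,\|x-\bu\|_X + \|A F(\bu)\|_X \le Z r_0 + Y = p(r_0)+r_0 < r_0,
\]
so $T$ sends $\overline{B_{r_0}(\bu)}$ into $B_{r_0}(\bu)$. This computation is precisely where the definition of the radii polynomial $p$ comes from, and it makes the role of the hypothesis $p(r_0)<0$ transparent.

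Finally I would assemble the pieces. The closed ball $\overline{B_{r_0}(\bu)}$ is a complete metric space, $T$ maps it into itself and is a contraction with constant $Z<1$, so Banach's theorem produces a unique fixed point $\tu$, and the self-mapping estimate locates $\tu$ in the open ball $B_{r_0}(\bu)$. The fixed-point identity $\tu=\tu - A F(\tu)$ reads $A F(\tu)=0$, whence $F(\tu)=0$ by injectivity of $A$; conversely any zero of $F$ in the ball is a fixed point of $T$, so uniqueness of the fixed point transfers to uniqueness of the zero. I expect the only point requiring care to be the invocation of the mean value inequality for the merely Fr\'echet-differentiable map $T$ (valid on the convex domain $\overline{B_{r_*}(\bu)}$ without assuming continuity of the derivative, via reduction to the scalar case), together with the bookkeeping that every relevant segment stays inside $\overline{B_{r_*}(\bu)}$, the exact region where the $Z$ bound is known to hold; the remaining estimates are routine.
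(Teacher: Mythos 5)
Your proposal is correct and follows essentially the same route as the paper's proof: recast $F=0$ as a fixed-point problem for $T(u)=u-AF(u)$, use the mean value inequality with the $Z$ bound to get a contraction on $\overline{B_{r_0}(\bu)}$, use $Y$ to show $T$ maps the closed ball into the open ball, apply the contraction mapping theorem, and transfer uniqueness to zeros of $F$ via invertibility of $A$ (which both arguments extract from $\|I-A\,DF(\bu)\|_{B(X)}\le Z<1$). The only differences are cosmetic — you establish the invertibility of $A$ at the outset rather than at the end, and you make explicit the (valid) point that the mean value inequality holds for merely Fr\'echet differentiable maps on convex sets.
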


\begin{proof}
Define the Newton-like operator $T:U \to X$ by 
\[
T(u) = u - A F(u),
\]
and note that $DT(u) = I - A  DF(u)$. The idea of the proof is to show that $T: \overline{B_{r_0}(\bu)} \to B_{r_0}(\bu)$ is a contraction. Consider $r_0 \in (0, r_*]$ such that $p(r_0) < 0$. Then $Zr_0 + Y < r_0$, and since $ r_0 $ is not zero, we have that
$Z \le Z + \frac{Y}{r_0} < 1$. For $x,y \in \overline{B_{r_0}(\bu)}$ we use the Mean Value Inequality to get that
\begin{align*}
\|T(u) - T(v)\|_X &\le \sup_{z \in \overline{B_{r_0}(\bx)}} \|DT(z)\|_{B(X)} \|u - v\|_X 
\\
& = \sup_{z \in \overline{B_{r_0}(\bu)}} \|I - A DF(z)\|_{B(X)} \|u - v\|_X
\\
&\le \sup_{z \in \overline{B_{r_*}(\bu)}} \| I - ADF(z)\|_{B(X)}  \|u - v\|_X
\\
& \le Z \|u - v\|_X.
\end{align*}
Since $ Z < 1 $, $T$ is a contraction on $\overline{B_{r_0}(\bu)}$. To see that $T$ maps the closed ball into itself (in fact in the open ball) choose $u \in \overline{B_{r_0}(\bu)}$, and observe that
\begin{align*}
\|T(u) - \bu \|_X &\leq \| T(u) - T(\bu)\|_X + \|T(\bu) - \bu \|_X \\
&\leq Z \|u - \bu\|_X + \| A F(\bu)\|_X \\
&\leq Zr_0 + Y = p(r_0)+r_0< r_0,
\end{align*}
which shows that $T(u) \in B_{r_0}(\bu)$ for all $u \in \overline{B_{r_0}(\bu)}$.
It follows from the contraction mapping theorem that there exists a unique $\tu \in \overline{B_{r_0}(\bu)}$ such that $T(\tu)=\tu \in B_{r_0}(\bu)$. Since $Z<1$, we get 
\[
 \| I - A DF( \bu) \|_{B(X)} \le 
\sup_{z \in \overline{B_{r_*}(\bu)}} \| I - A DF(z) \|_{B(X)} \leq Z <1,
\]
and hence $A DF( \bu)$ is invertible. From this we get that $A$ is invertible. 
By invertibility of $A$ and by definition of $T$, the fixed points of $T$ are in one-to-one correspondence with the the zeros of $F$.
We conclude that there is a unique $\tu \in B_{r_0}(\bu)$ such that $F(\tu)=0$.
\end{proof}

In practice, we perform the rigorous computation of the bounds $Y$ and $Z$ with interval arithmetic \cite{MR0231516} in MATLAB using the library INTLAB \cite{Ru99a}.

\subsection{The first family of spatial relative equilibria} \label{sec:CAP_rel_eq}

To proceed with the computer-assisted proof of the relative equilibria we need to find an explicit
representation for $\nabla V^{H_{1}}:Fix(H_{1})\rightarrow Fix(H_{1})$. For
this purpose, we define the subspace%
\begin{equation*}
X_{1}=\left\{ \tilde{u}=(\tilde{u}_{0},\tilde{u}_{1},....,\tilde{u}%
_{[n/2]})\in \mathbb{R}^{3}\times ...\times \mathbb{R}^{3}:\tu%
_{j}=(x_{j},0,z_{j})\in \mathbb{R}^{3},~j=0,n/2\right\} 
\end{equation*}%
and the isomorphism%
\begin{equation*}
\iota _{1}:X_{1}\rightarrow Fix(H_{1}),\qquad \iota _{1}(\tilde{u})=\left(
u_{0},u_{1},....,u_{n}\right) ,
\end{equation*}%
given by $u_{j}=\tilde{u}_{j}$ for $j\in \lbrack 0,n/2]\cap \mathbb{N}$ and $%
u_{j}=R_{y}\tilde{u}_{n-j}$ for $j\in (n/2,n-1]\cap \mathbb{N}$. Therefore, the zeros of $\nabla V^{H_{1}}:Fix(H_{1})\rightarrow Fix(H_{1})$ correspond
to the zeros of%
\begin{equation*}
F_{1}\bydef\iota _{1}^{-1}\circ \nabla V^{H_{1}}\circ \iota _{1}:X_{1}\rightarrow
X_{1}.
\end{equation*}
More explicitly, we have that
\begin{equation}
F_{1}=(f_{0},f_{1},....,f_{[n/2]}):X_{1}\rightarrow X_{1},
\label{eq:eq_family_one}
\end{equation}
where 
\begin{equation*}
f_{j}(\tilde{u};\mu )=\left( \mu -s_{1}\right) \bar{I}u_{j}-\mu \frac{u_{j}}{%
\left\Vert u_{j}\right\Vert ^{3}}+\sum_{0\leq i\leq n/2(i\neq j)}\frac{%
u_{j}-u_{i}}{\left\Vert u_{j}-u_{i}\right\Vert ^{3}}+\sum_{0<i<n/2}\frac{%
u_{j}-R_{y}u_{i}}{\left\Vert u_{j}-R_{y}u_{i}\right\Vert ^{3}}.
\end{equation*}%
This fact can be verified directly. We conclude that the families of solutions of $F_{1}(\tilde{u};\mu )=0$
are the critical solutions of $V(u;\mu )$ with $u=\iota _{1}\left( \tilde{u}%
\right) \in Fix(H_{1})$.

To compute numerically the family $\mathfrak{F}_{1}^{k}$ (that is solutions
of $F_{1}(\tilde{u};\mu )=0$), we apply the pseudo-arclength continuation
method \cite{MR910499}, which we now briefly review. Denote $N \bydef 3([n/2]+1)-2$, so that $X_{1} \cong \R^N$. Using that notation, $F_{1}:\R^{N+1} \to \R^N$. The idea of the pseudo-arclength continuation is to treat the parameter $\mu$ as a variable, to set $U \bydef (\tilde{u};\mu) \in \R^{N+1}$ and perform a continuation with respect to the {\em pseudo-arclength parameter}. The process begins with a solution $U_0$ (exact or numerical given within a prescribed tolerance). To produce a {\em predictor}, which will serve as an initial condition to Newton's method, we compute a tangent vector $\dot U_0$ (of unit length) to the curve at $U_0$. It can be computed using the formula
\[
D_U F_1(U_0) \dot U_0 = \left[ D_{\tilde{u}} F_1(U_0) ~~  \frac{\partial F_1}{\partial \mu}(U_0) \right] \dot U_0 = 0 \in \R^N.
\]
Denoting the pseudo-arclength parameter by $\Delta_s>0$, set the predictor to be
\[
\hat{U}_1 \bydef U_0 + \Delta_s \dot U_0 \in \R^{N+1}.
\]
The {\em corrector} step then consists of converging back to the solution curve on the hyperplane perpendicular to the tangent vector $\dot U_0$ which contains the predictor $\hat{U}_1$. The equation of this plan is given by
$E(U) \bydef (U-\hat U_1) \cdot \dot U_0 = 0$. Then, we apply Newton's method to the new function
\begin{equation} \label{eq:PAL_fonction}
U \mapsto \begin{pmatrix} E(U) \\ F_1(U) \end{pmatrix}
\end{equation}
with the initial condition $\hat U_1$ in order to obtain a new solution $U_1$ given again within a prescribed tolerance. We reset $U_1 \mapsto U_0$ and start over. At each step of the algorithm, the function defined in \eqref{eq:PAL_fonction} changes since the plane $E(U)=0$ changes. With this method, it is possible to continue past folds. Repeating this procedure iteratively produces a branch of solutions.

We initiate the numerical continuation from the trivial solutions $\tilde{u}=\iota_{1}^{-1}(a)$ at $\mu =s_{k}$. More explicitly, at the beginning, we set $U_0 = (\iota_{1}^{-1}(a),s_{k}) \in \R^{N+1}$. Then, along the continuation, we use Theorem~\ref{thm:newton-kantorovich} to verify the existence (with tight rigorous
error bounds) of several solutions of $F_1=0$ (with $F_{1}$ defined in \eqref{eq:eq_family_one}), hence yielding spatial relative equilibria in the family $\mathfrak{F}_{1}^{k}$. See Figure~\ref{fig:all_plots} for plots of several continuations.

A similar analysis and numerical implementation have been implemented for the family of solutions $\mathfrak{F}_{2}^{k}$ satisfying the symmetry (\ref{Sym2}), by using instead the map
\begin{align*}
F_{2}(\tilde{u};\mu )&=(f_{0},f_{1},....,f_{[n/2]}):X_{2}\rightarrow X_{2}, \\
X_{2}&=\left\{ \tilde{u}=(u_{0},u_{1},....,u_{[n/2]}):u_{j}=(x_{j},0,0)\in \mathbb{R}^{3},\quad
j=0,n/2\right\} ,
\end{align*}%
where%
\begin{equation*}
f_{j}(\tilde{u};\mu )=\left( \mu -s_{1}\right) \bar{I}u_{j}-\mu \frac{u_{j}}{%
\left\Vert u_{j}\right\Vert ^{3}}+\sum_{0\leq i\leq n/2(i\neq j)}\frac{%
u_{j}-u_{i}}{\left\Vert u_{j}-u_{i}\right\Vert ^{3}}+\sum_{0<i<n/2}\frac{%
u_{j}-R_{z}R_{y}u_{i}}{\left\Vert u_{j}-R_{z}R_{y}u_{i}\right\Vert ^{3}}.
\end{equation*}%
See Figure~\ref{fig:all_plots2} for plots of several continuations in the family $\mathfrak{F}_{2}^{k}$.

\begin{remark}[\bf Colour coding for Figures~\ref{fig:all_plots} and~\ref{fig:all_plots2}]
The colour coding for the presentation of the relative equilibrium solutions in Figures~\ref{fig:all_plots} and~\ref{fig:all_plots2} is as follows. Each branch going from green to red represents the main branch, while each cyan to purple branch portrays a branch born from a secondary bifurcation from the main branch. The points in the following colours were not successfully validated with computer-assisted proofs for three reasons: (Blue) unable to verify the relative equilibria, (Black) unable to verify the eigenvalues and (Orange) unable to verify the nonresonance of the eigenvalues.
\label{coding}
\end{remark}

\begin{remark}
All the spatial relative equilibria are unstable near the polygon because the polygon is unstable according to the computations obtained in \cite{MR3007103}. Unfortunately, we
were not able to find linearly stable solutions from the numerical
exploration carried on for the branches, so all the relative
equilibria that we computed are unstable.
\end{remark}
\begin{figure}
 \vspace{-1.5cm}
     \centering
     \begin{subfigure}[b]{0.49\textwidth}
         \centering
         \includegraphics[width=\textwidth]{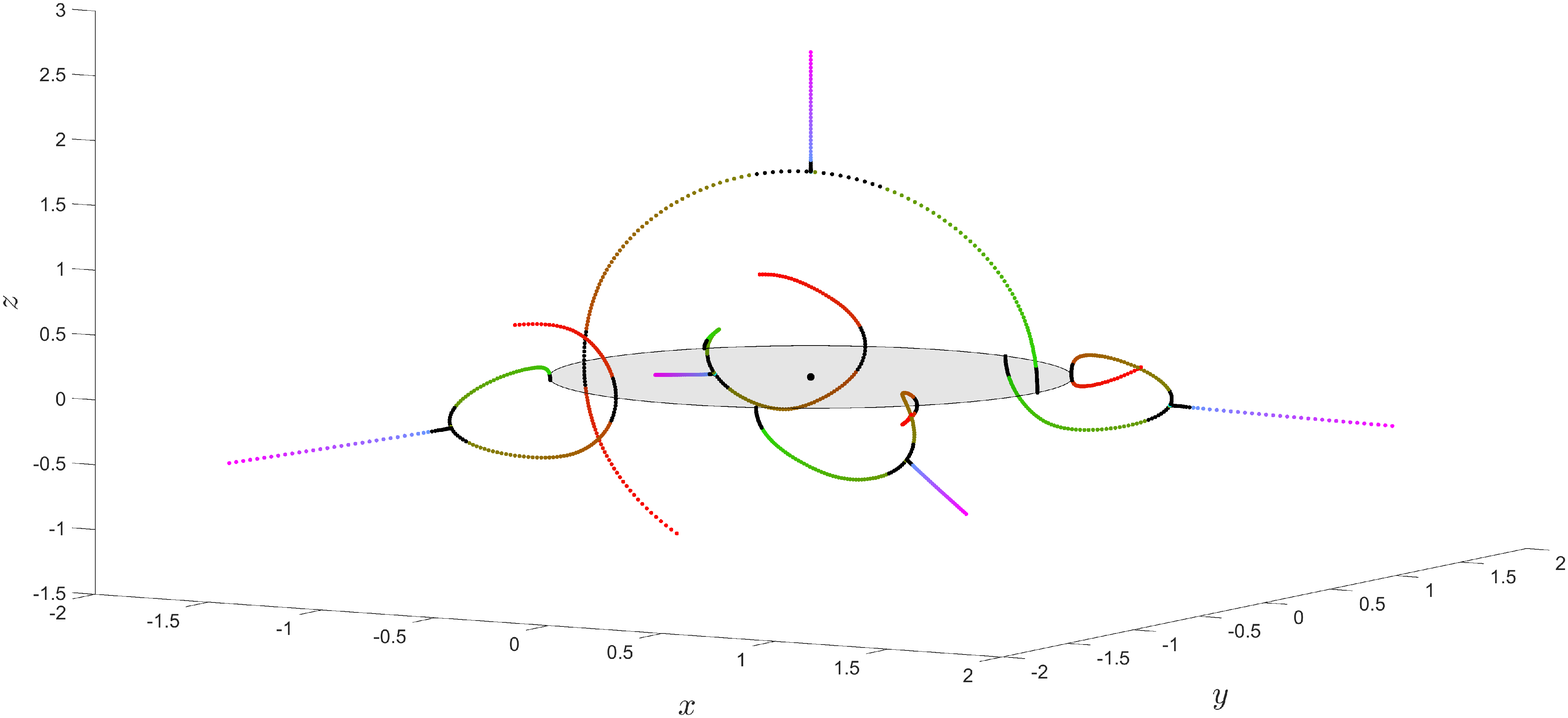}
         \caption{$n=5$, $k=2$}
         \label{fig:1a}
     \end{subfigure}
     \hfill
     \begin{subfigure}[b]{0.49\textwidth}
         \centering
         \includegraphics[width=\textwidth]{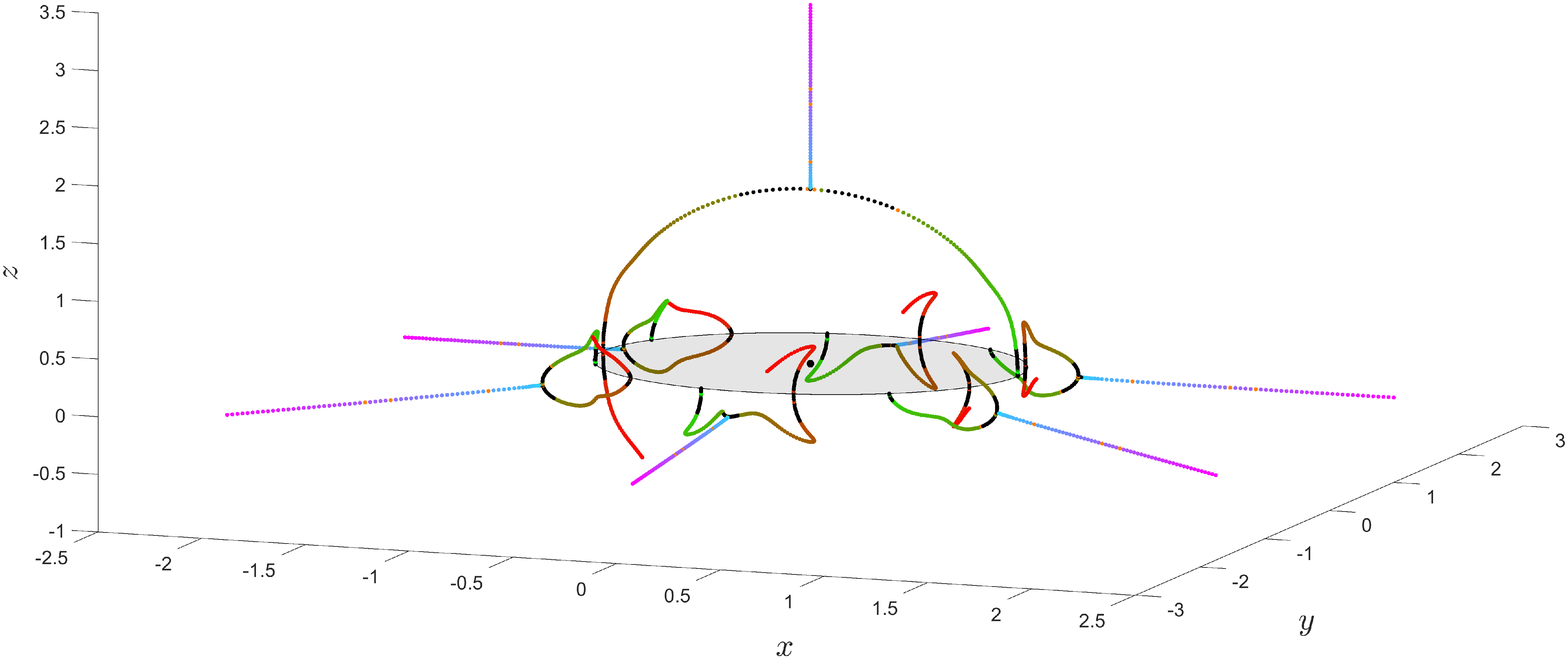}
         \caption{$n=7$, $k=2$}
         \label{fig:2a}
     \end{subfigure}
     \hfill
     \begin{subfigure}[b]{0.49\textwidth}
         \centering
         \includegraphics[width=\textwidth]{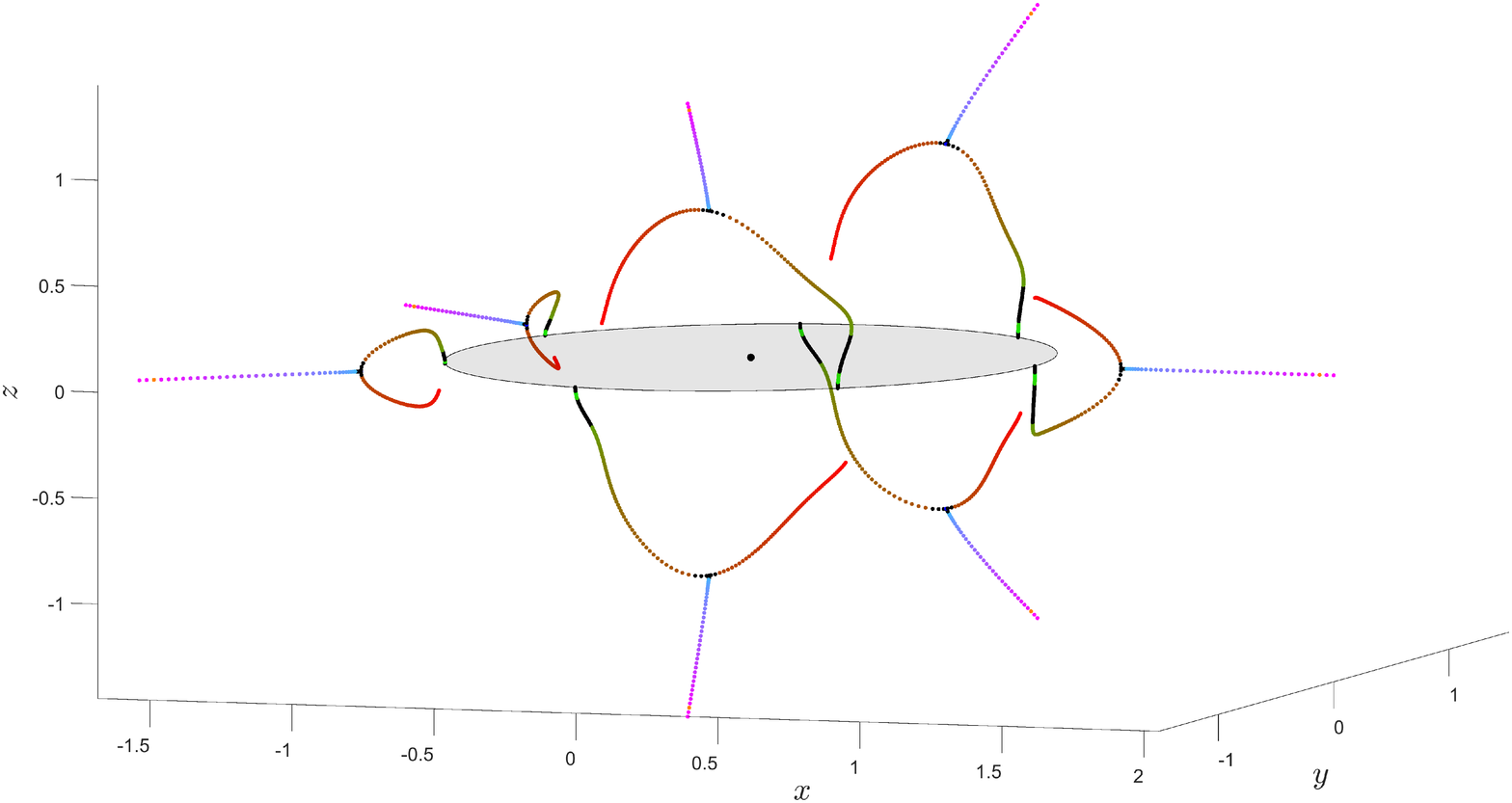}
         \caption{$n=7$, $k=3$}
         \label{fig:3a}
     \end{subfigure}
     \hfill
     \begin{subfigure}[b]{0.49\textwidth}
         \centering
         \includegraphics[width=\textwidth]{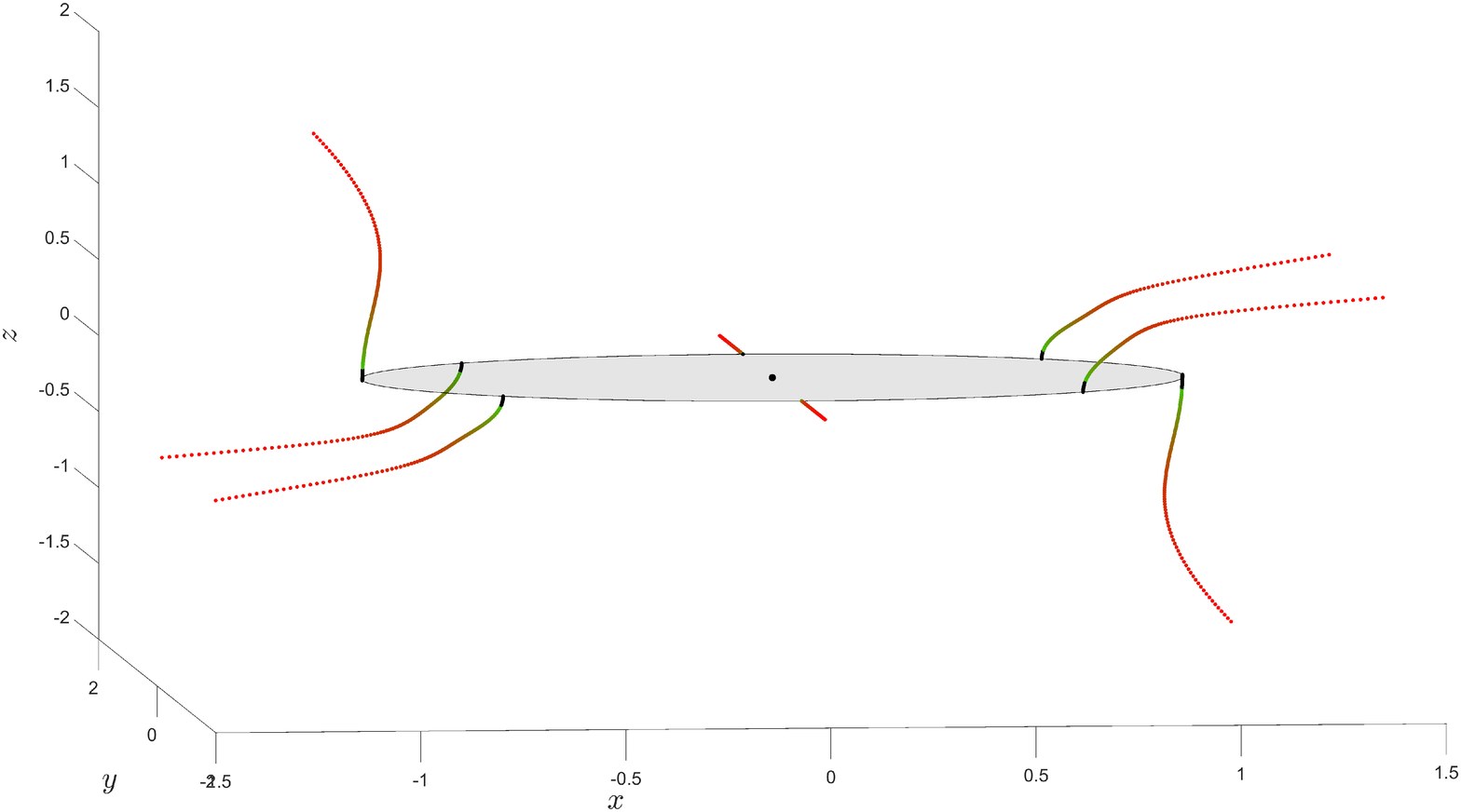}
         \caption{$n=8$, $k=3$}
         \label{fig:4a}
     \end{subfigure}
          \hfill
     \begin{subfigure}[b]{0.49\textwidth}
         \centering
         \includegraphics[width=\textwidth]{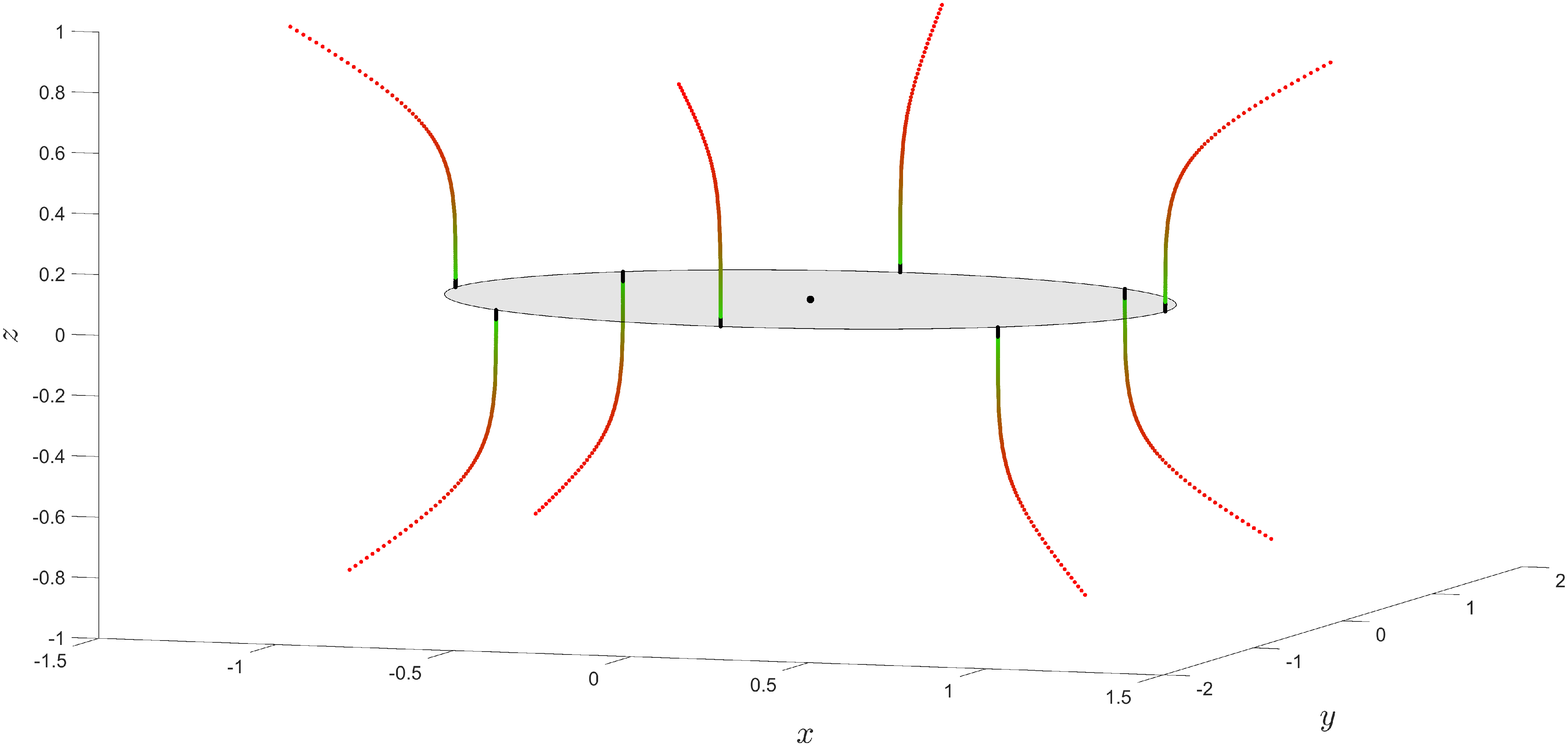}
         \caption{$n=8$, $k=4$}
         \label{fig:5a}
     \end{subfigure}
     \hfill
     \begin{subfigure}[b]{0.49\textwidth}
         \centering
         \includegraphics[width=\textwidth]{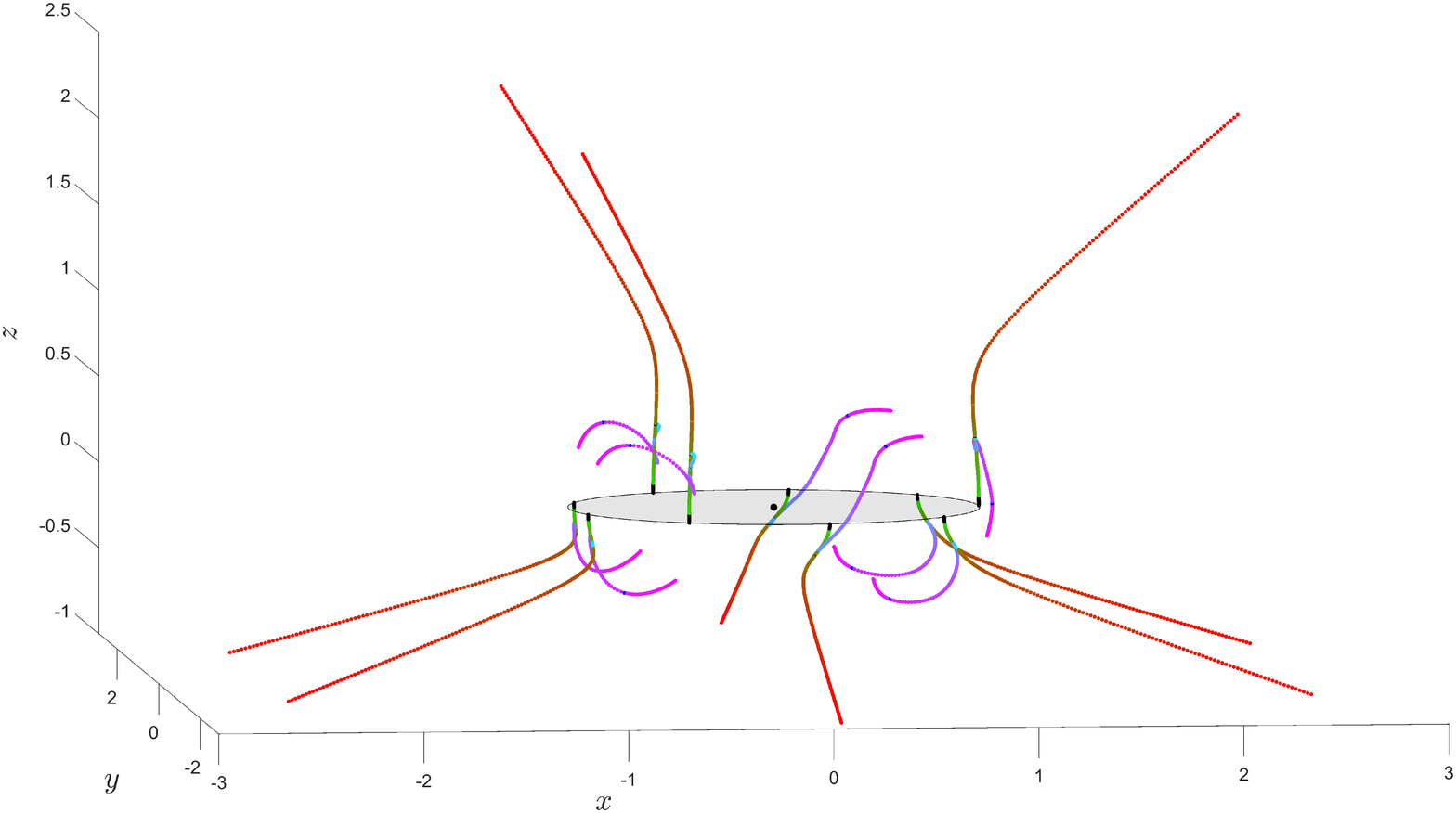}
         \caption{$n=9$, $k=3$}
         \label{fig:6a}
     \end{subfigure}
          \hfill
     \begin{subfigure}[b]{0.49\textwidth}
         \centering
         \includegraphics[width=\textwidth]{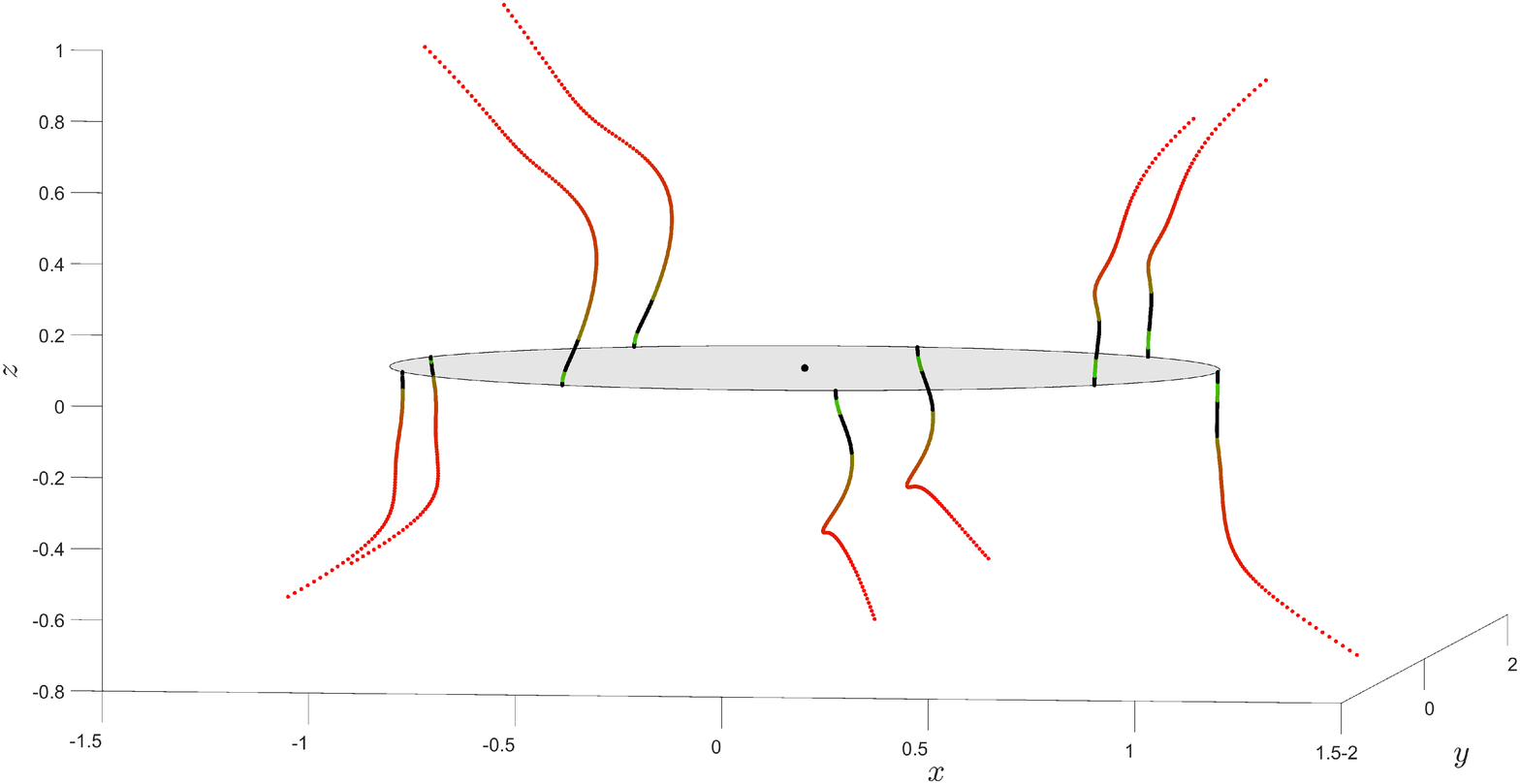}
         \caption{$n=9$, $k=4$}
         \label{fig:7a}
     \end{subfigure}
          \hfill
     \begin{subfigure}[b]{0.49\textwidth}
         \centering
         \includegraphics[width=\textwidth]{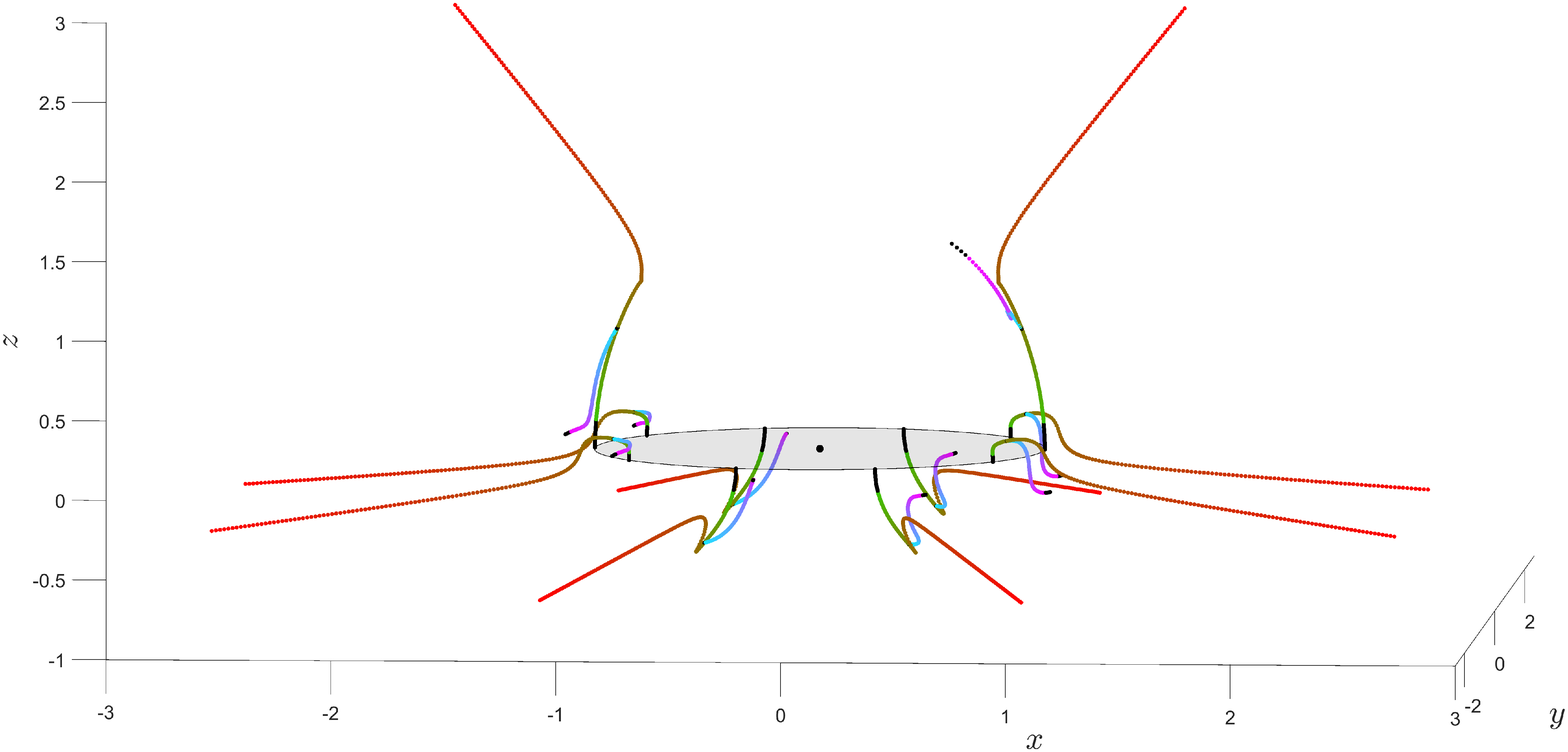}
         \caption{$n=10$, $k=2$}
         \label{fig:8a}
     \end{subfigure}
     \caption{Continuation of equilibria in the family $\mathfrak{F}_{1}^{k}$ for different values of $n$ and $k$. The colour coding in the figure is presented in Remark \ref{coding} }
        \label{fig:all_plots}
\end{figure}

\begin{figure}
     \centering
     \begin{subfigure}[b]{0.5\textwidth}
         \centering
         \includegraphics[width=\textwidth]{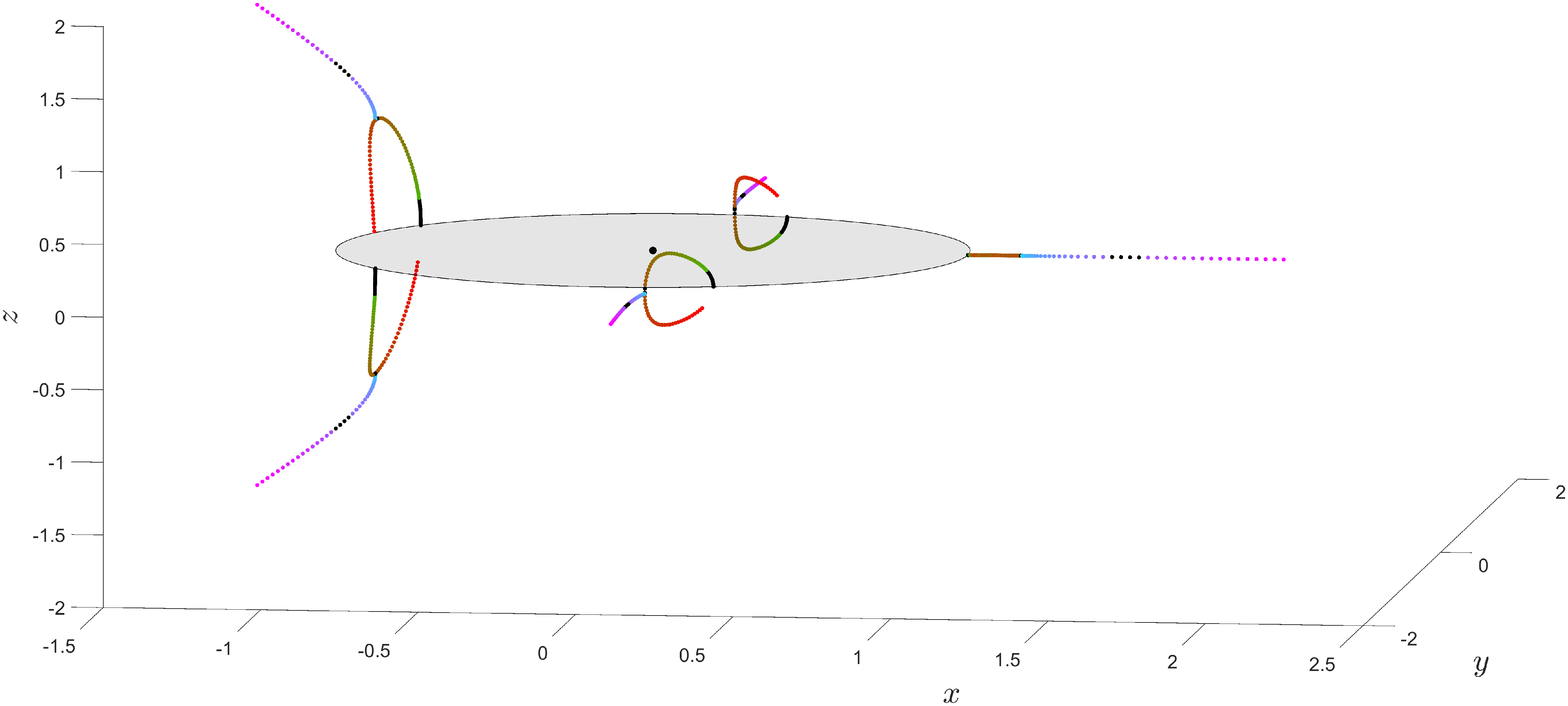}
         \caption{$n=5$, $k=2$}
         \label{fig:1b}
     \end{subfigure}
     \hfill
     \begin{subfigure}[b]{0.49\textwidth}
         \centering
         \includegraphics[width=\textwidth]{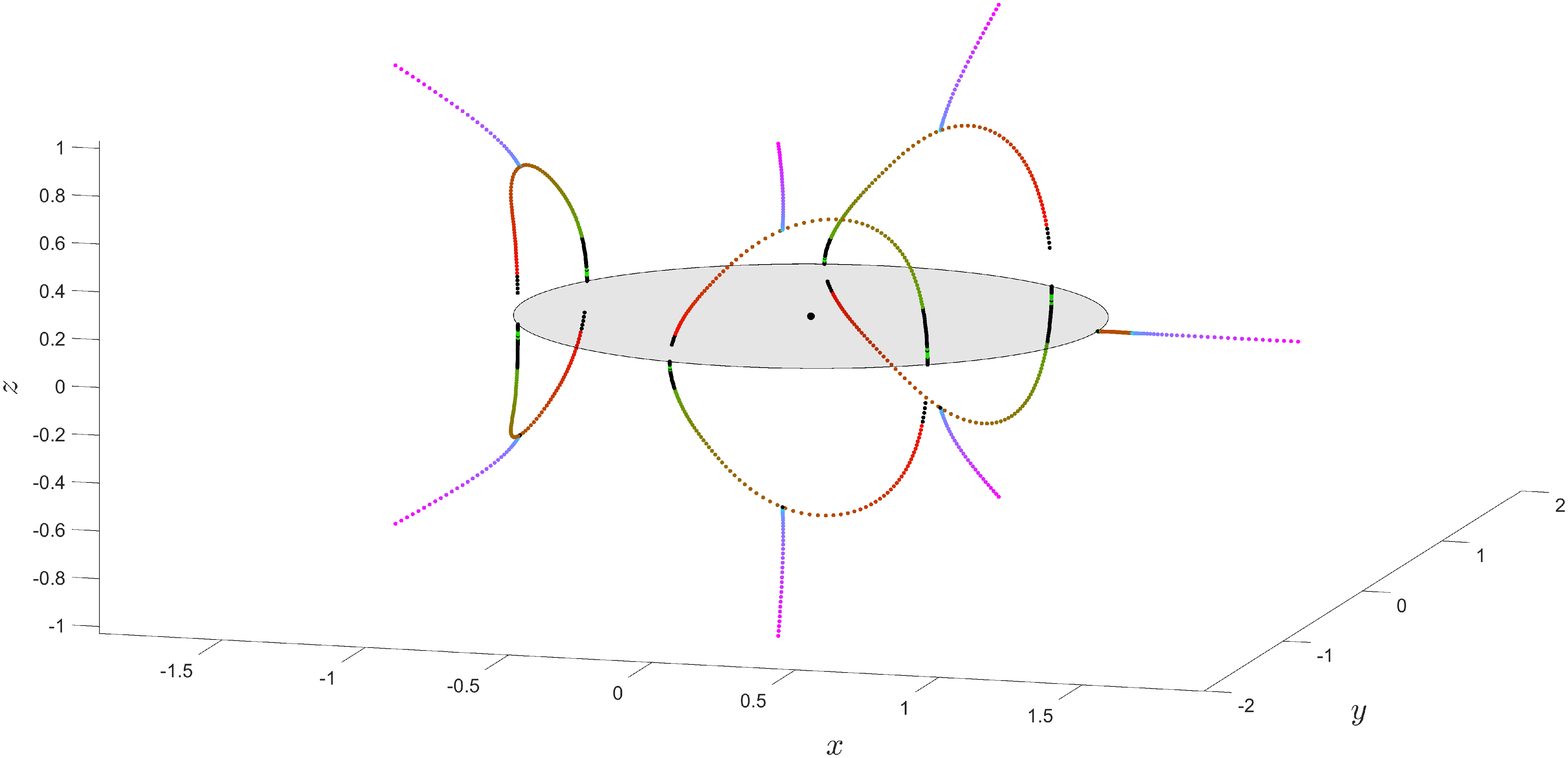}
         \caption{$n=7$, $k=2$}
         \label{fig:2b}
     \end{subfigure}
     \hfill
     \begin{subfigure}[b]{0.49\textwidth}
         \centering
         \includegraphics[width=\textwidth]{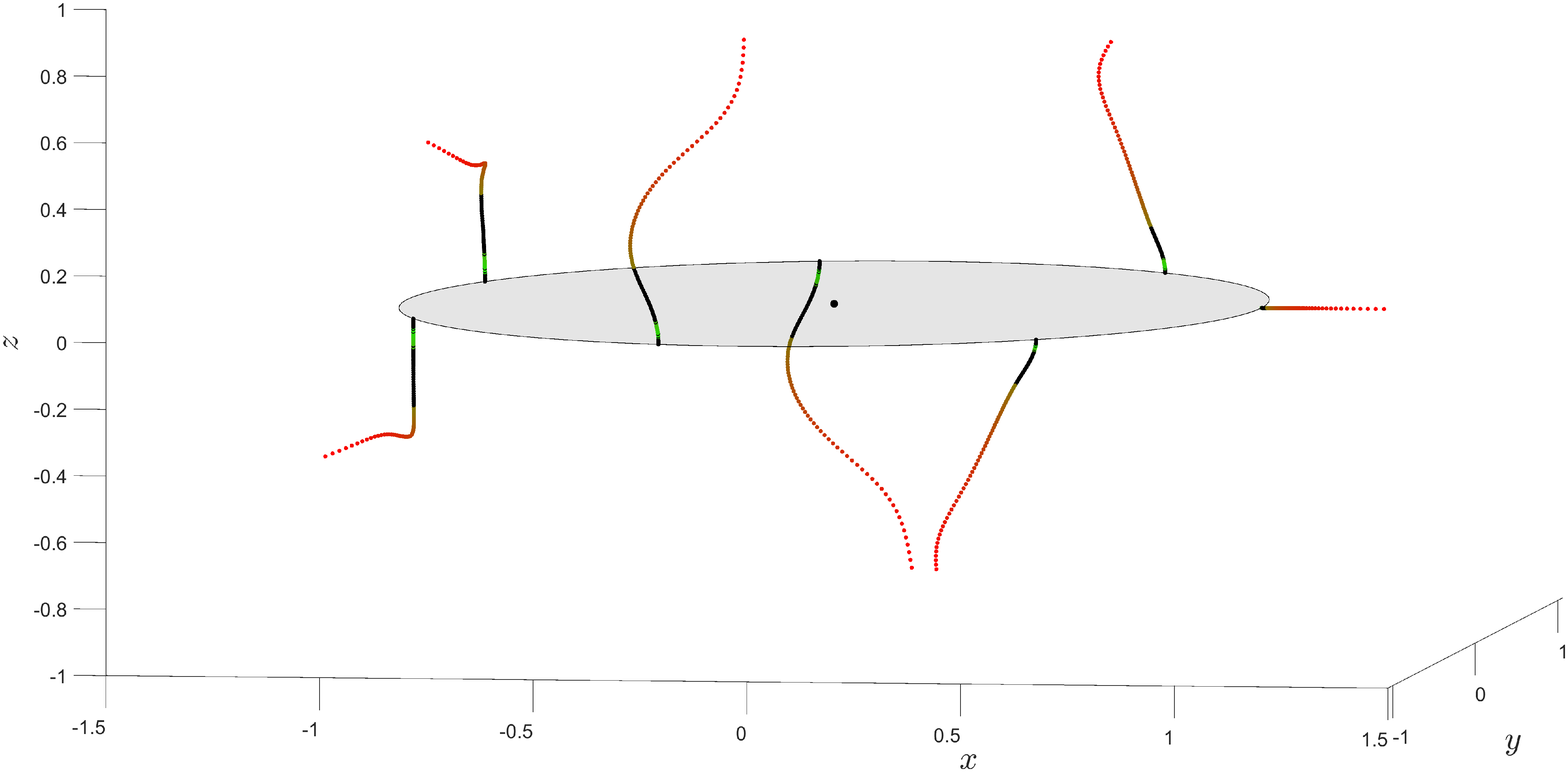}
         \caption{$n=7$, $k=3$}
         \label{fig:3b}
     \end{subfigure}
     \hfill
     \begin{subfigure}[b]{0.49\textwidth}
         \centering
         \includegraphics[width=\textwidth]{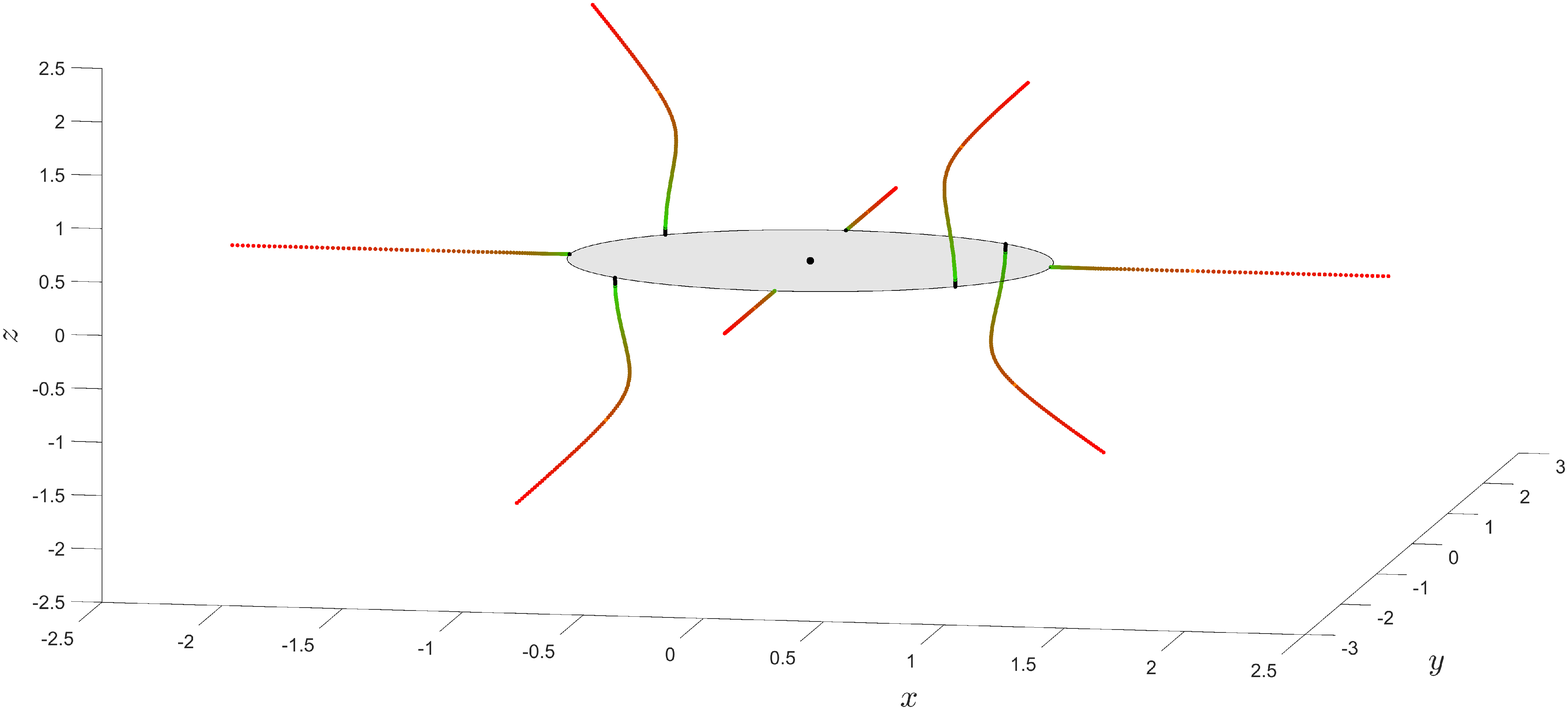}
         \caption{$n=8$, $k=2$}
         \label{fig:4b}
     \end{subfigure}
          \hfill
     \begin{subfigure}[b]{0.49\textwidth}
         \centering
         \includegraphics[width=\textwidth]{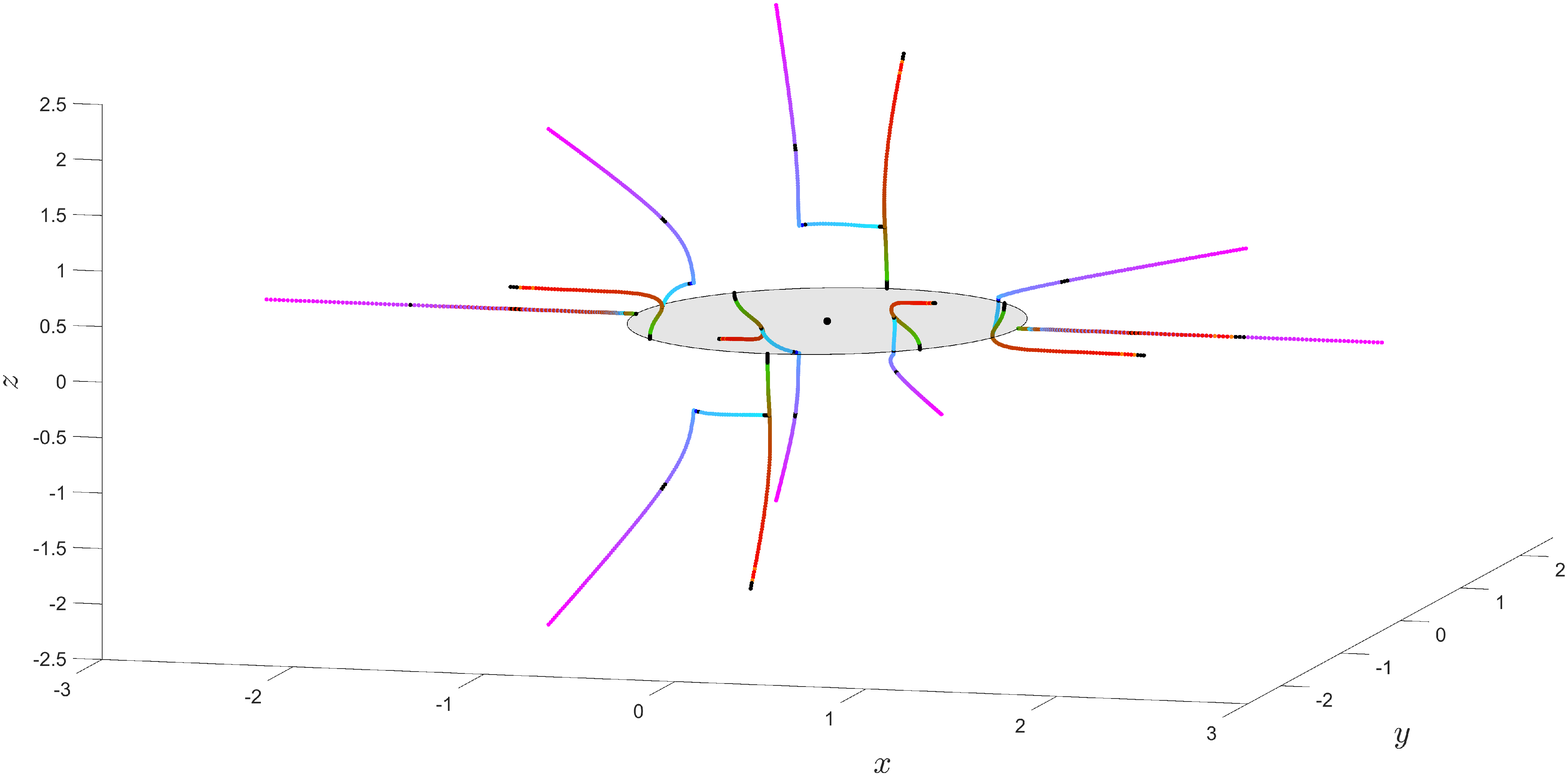}
         \caption{$n=8$, $k=3$}
         \label{fig:5b}
     \end{subfigure}
     \hfill
     \begin{subfigure}[b]{0.49\textwidth}
         \centering
         \includegraphics[width=\textwidth]{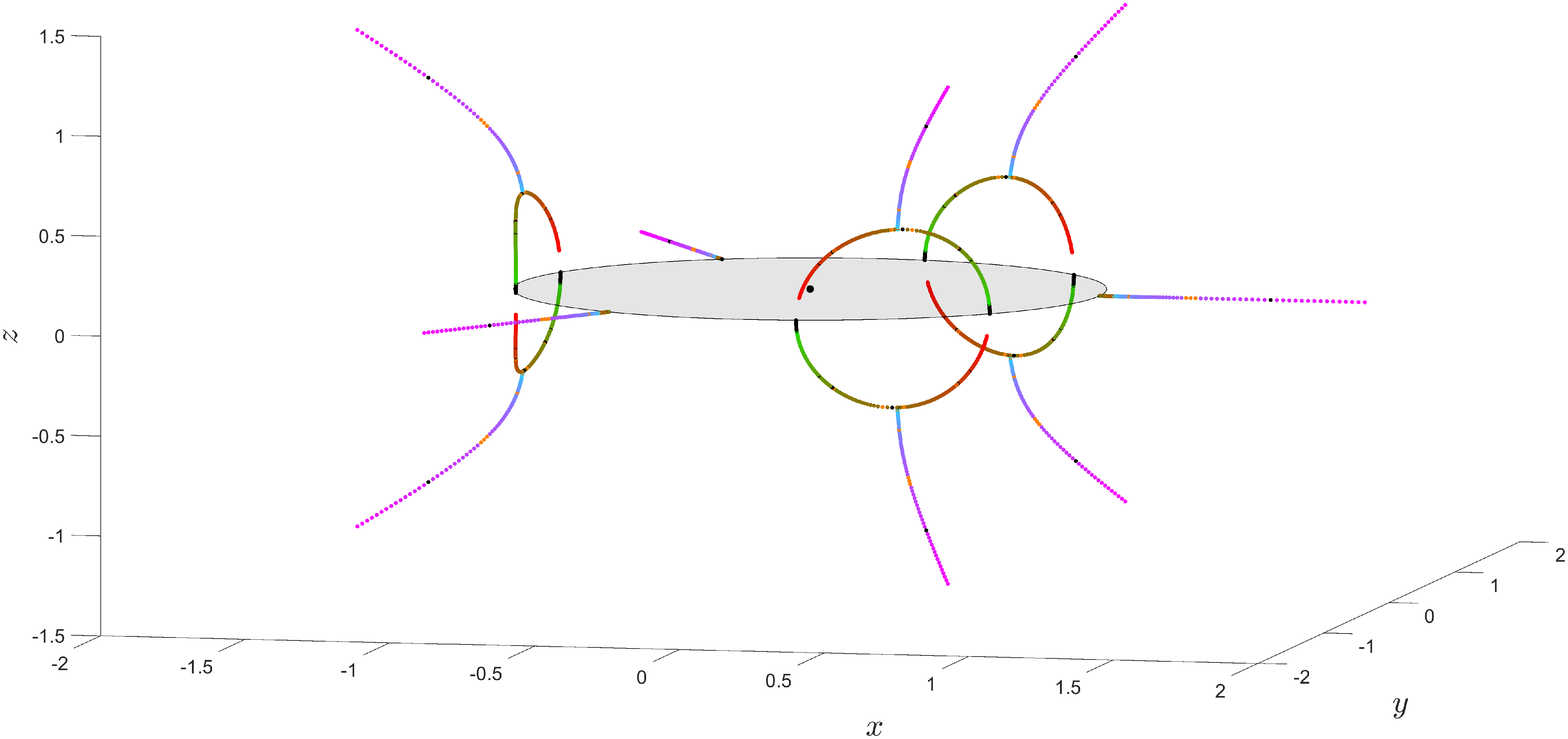}
         \caption{$n=9$, $k=3$}
         \label{fig:6b}
     \end{subfigure}
          \hfill
     \begin{subfigure}[b]{0.49\textwidth}
         \centering
         \includegraphics[width=\textwidth]{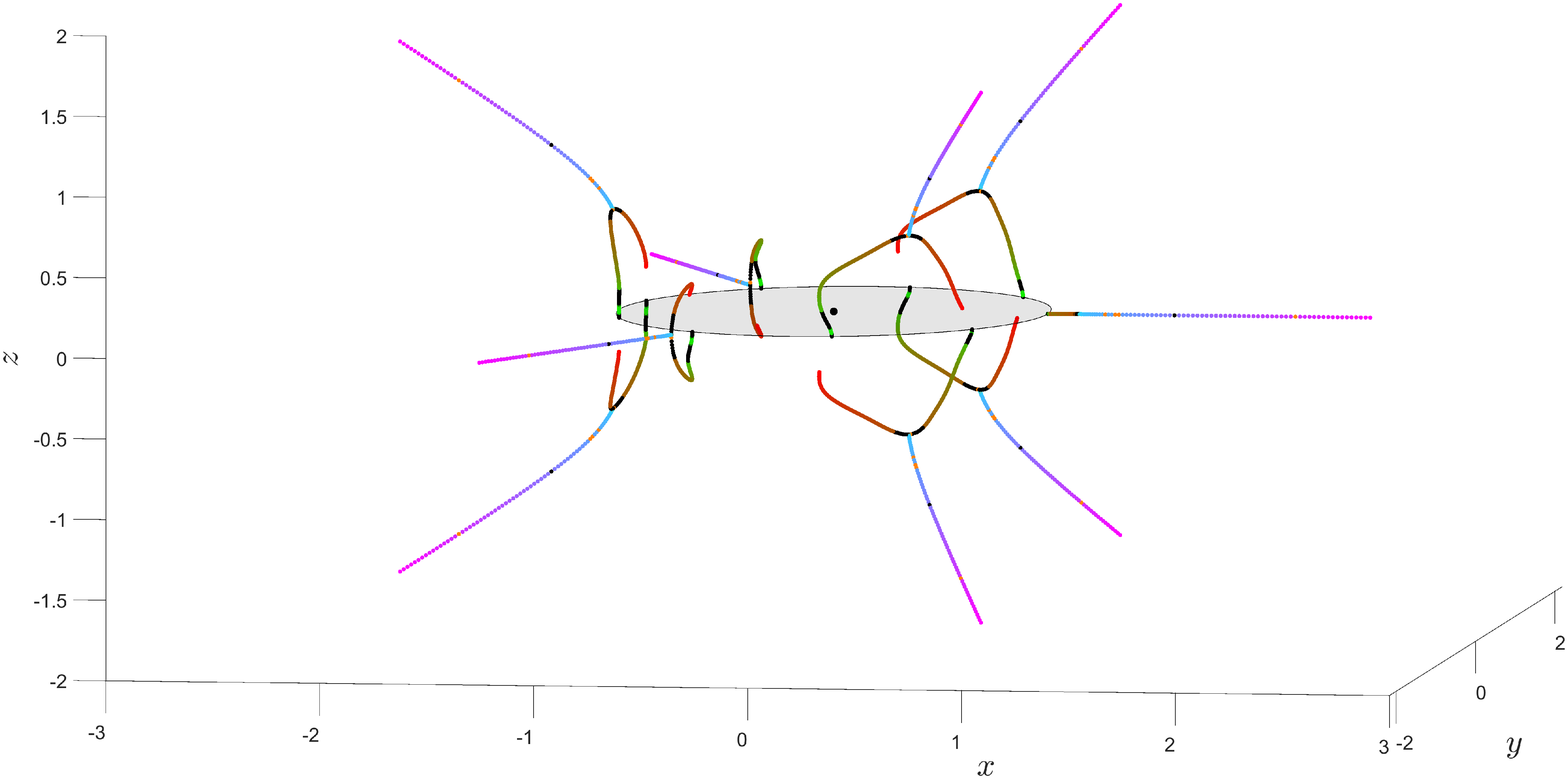}
         \caption{$n=9$, $k=4$}
         \label{fig:7b}
     \end{subfigure}
          \hfill
     \begin{subfigure}[b]{0.49\textwidth}
         \centering
         \includegraphics[width=\textwidth]{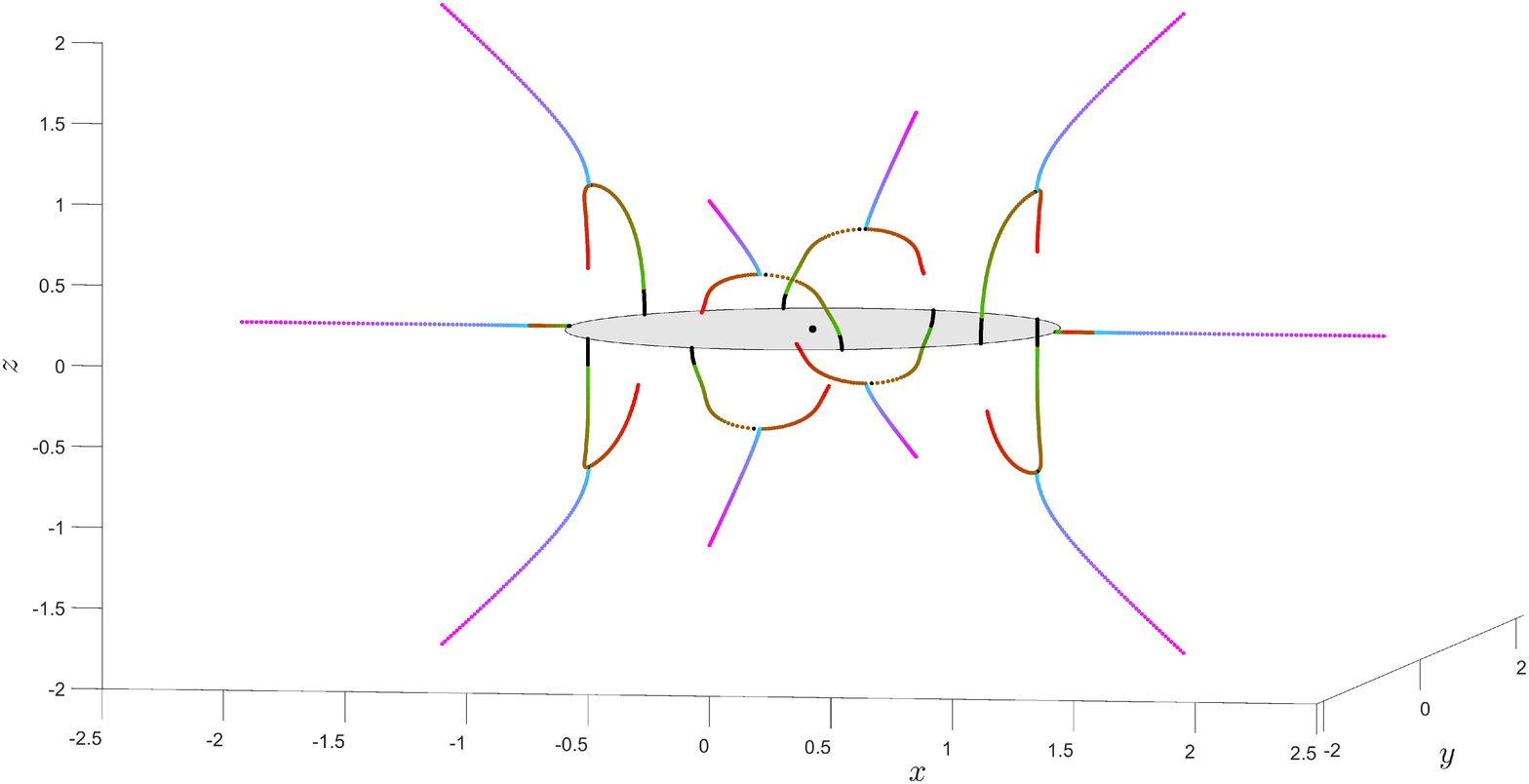}
         \caption{$n=10$, $k=2$}
         \label{fig:8b}
     \end{subfigure}
        \caption{Continuation of equilibria in the family $\mathfrak{F}_{2}^{k}$ for different values of $n$ and $k$. The colour coding in the figure is presented in Remark \ref{coding}.}
        \label{fig:all_plots2}
\end{figure}

\subsection{A computer-assisted validation of the spectra} \label{sec:CAP_frequencies}

%
%
%


The existence of non-trivial $2\pi /\nu $%
-periodic solutions of (\ref{Equ}) arising from a spatial relative
equilibrium $(u_{0};\mu _{0})$ relies on the validation of the hypotheses of Theorem~\ref{Thm2}; namely, to verify the existence of a $SO(2)$-nonresonant eigenvalue of $L(u_0;\mu_0)$ (see Definition~\ref{Def1}). Now we turn our attention to prove this hypothesis by means of Theorem~\ref{thm:newton-kantorovich}. 
Recall from Remark~\ref{rem:eigs_relations} that $\bar{\lambda},-\lambda ,-\bar{\lambda}$ are
eigenvalues of $L(u_{0};\mu _{0})$ if $\lambda \in \mathbb{C}$ is an
eigenvalue of $L(u_{0};\mu _{0})$. Thus, if we prove the existence of a
unique eigenvalue $\lambda _{0}$ in a neighbourhood $B_{\varepsilon }(i\bar{%
\nu})\subset \mathbb{C}$, then $\lambda _{0}=i\nu_0$ for some $\nu_0 \in \R$ (i.e. $\lambda _{0}$ must be purely imaginary).

Recall that the existence of the relative equilibria $u_0 \in B_{r_0}(\bar u_0)$ is known via a successful application of Theorem~\ref{thm:newton-kantorovich}, where $\bar u_0$ is a numerical solution and $r_0>0$ is the rigorous error bound. The validation of the eigenvalues of $L(u_0;\mu_0)$, which follows the approach \cite{MR3204427}, begins by finding numerically the eigenvalues of $L(\bar u_0;\mu_0)$. Denote by $\bar \lambda_1,\dots,\bar \lambda_{6n}$ the numerical eigenvalues of $L(\bar u_0;\mu_0)$ (computed using the function {\tt eig.m} in MATLAB, which returned the eigenvalues and their corresponding eigenvectors $\bar v_1,\dots,\bar v_{6n}$). Fix $j \in \{1,\dots,6n\}$. Then, in order to obtain local isolation of the eigenpairs $(\lambda_j,v_j) \in \C \times \C^{6n}$, we rescale the eigenvector $v_j$  as follows. Denote by $k=k(j)$ the component of $\bar v_j$ with the largest magnitude, that is 
\[
|(\bar v_j)_k| = \max_{\ell=1,\dots,6n} \left\{ |(\bar v_j)_\ell| \right\}.
\]
Note that $k$ may not be unique. Then, the {\em phase condition} imposed to isolate the eigenpair $(\lambda_j,v_j)$ is $(v_j)_k = (\bar v_j)_k$, where recall that $\bar v_j \in \C^{6n}$ is the numerical approximation for $v_j$. The corresponding zero finding problem is setup in the following way
\begin{equation} \label{eq:eig_proof_map}
F_{\rm eig}(v,\lambda) \bydef 
\begin{pmatrix}
L(u_0;\mu_0)v-\lambda v \\
v \cdot e_k - (\bar v_j)_k
\end{pmatrix} = 0,
\end{equation}
where $\lambda$ and $v$ are the eigenvalue and eigenvector, respectively, and $e_k$ is the $k^{th}$ vector of the canonical basis of $ \R^{6n} $. Without loss of generality, denote by $\lambda_1=\lambda_2=0$ the two zero eigenvalues of $L(u_0;\mu_0)$ due to the action of the group $SO(2)$. 
For each $j \in \{3,\dots,6n\}$, the rigorous enclosure of the eigenpair $(\lambda_j,v_j)$ is obtained by validated the existence of a solution of $F_{\rm eig} = 0$ (where the map is defined in \eqref{eq:eig_proof_map}) using Theorem~\ref{thm:newton-kantorovich}. Denote by $r_j>0$ the radius of the ball $B_{r_j}(\bar \lambda_j,\bar v_j) \subset \C \times \C^{6n}$ which contains the unique eigenpair with $v \cdot e_k = (\bar v_j)_k$, which we denote simply by $(\lambda_j,v_j)$. 

For $j=3,\dots,6n$, denote by 
\[
D_j \bydef \left\{ z \in \C : |z_j-\bar \lambda_j| \le r_j \right\} \subset \C
\]
the disk which contains the true eigenvalue $\lambda_j$. Assume that numerically, two eigenvalues are given by $\pm i \bar \nu_0$, for some $\bar \nu_0>0$. Without loss of generality, denote by $\lambda_{3}$ and $\lambda_{4}$ the true eigenvalues such that 
\[
|\lambda_{3}-i \bar \nu_0| \le r_3 
\quad \text{and} \quad
|\lambda_{4}+i \bar \nu_0| \le r_4.
\]
By unicity, the true eigenvalues satisfy  $\lambda_3= i \nu_0$ and $\lambda_4= -i \nu_0$, for some $\nu_0>0$ since otherwise the disk $D_3$ and $D_4$ would contain more eigenpairs by the comment above (see also Remark~\ref{rem:eigs_relations}). Hence $\lambda_{3}= i \nu_0 \in D_3$ and $\lambda_{4}= i \nu_0 \in D_4$. Denote
\[
\mathcal D \bydef \bigcup_{j=5}^{6n}  D_j
\]
which contains rigorously $\lambda_5,\dots,\lambda_{6n}$. The other four eigenvalues are given by $0,0,\pm i \nu_0$. 
For each spatial relative equilibria rigorously proven in Section~\ref{sec:CAP_rel_eq}, we verified rigorously that $\mathcal D \cap i \nu_0 \tilde \N = \emptyset$ (where $\tilde \N \bydef \{\ell \in \mathbb Z : \ell \ge 2\}$), hence showing rigorously that the eigenvalue $i\nu _{0}$ is a $SO(2)$-nonresonant eigenvalue. All of the computations were carried out in MATLAB using the library INTLAB \cite{Ru99a}.

\vskip0.25cm \textbf{Acknowledgements.} JP.L. was partially supported by NSERC Discovery Grant. K.C. was partially supported by an ISM-CRM Undergraduate Summer Scholarship.  C.G.A was partially supported by UNAM-PAPIIT project IA100121.

\end{document}